\newlength{\defbaselineskip}
\newcommand{\be}{\begin{eqnarray}}
\newcommand{\ee}{\end{eqnarray}}
\newcommand{\bestar}{\begin{eqnarray*}}
\newcommand{\eestar}{\end{eqnarray*}}
\newcommand{\ignore}[1]{}
{} \theoremstyle{plain}
\newtheorem{thm}{Theorem}[section]
\newtheorem{cor}{Corollary}[section]
\newtheorem{lemma}{Lemma}[section]
\newtheorem{exam}{Example}[section]
\theoremstyle{definition}
\newtheorem{rem}{Remark}[section]
\numberwithin{equation}{section}
\def\>{\geq}
\def\<{\leq}
\begin{document}

\title{ Strong limit theorems for step-reinforced random walks}

\author{
Zhishui Hu \thanks{Email: huzs@ustc.edu.cn} ~~~~and~~~~ Yiting Zhang \thanks{Email: zyt1999@mail.ustc.edu.cn} \\
{\small Department of Statistics and Finance, School of Management} \\
{\small University of Science and Technology of China}\\
{\small Hefei 230026, China}}

\date{}

\maketitle

\begin{abstract}

A step-reinforced random walk is a discrete-time non-Markovian process with long range memory.
At each step,   with a fixed probability $p$, the positively step-reinforced random walk  repeats one of its preceding steps chosen uniformly at random,
and  with  complementary probability $1-p$, it has an independent increment.
The negatively step-reinforced random walk follows the same reinforcement algorithm but when a step is repeated its sign is also changed.
Strong laws of large numbers and strong invariance principles  are established for positively and negatively step-reinforced random walks in this work.
Our approach relies on  two general theorems on invariance
principle for martingale difference sequences and a truncation argument. As by-products of our main results,  the law of iterated logarithm
and the functional central limit theorem are also obtained for step-reinforced random walks.

\vskip 0.2cm \noindent{\it Key words:} Reinforcement, random walk, strong invariance principles, martingale.
 \vskip 0.2cm

\noindent{\it Mathematics Subject Classification}:
   60F15;   
   60F17;  
   60G50    
\end{abstract}

\section{Introduction}

Let $\{X_n, n\ge 1\}$ be a sequence  of i.i.d. random variables and
let $\{\varepsilon_n, n \ge 2\}$ be a sequence of i.i.d. Bernoulli variables with parameter $p\in [0,1]$.
For each $n\ge 2$, let  $U_n$ be a random variable uniformly distributed on $ \{ 1,2, \cdots ,n-1 \} $.   We always assume that all the above random variables $X_1, X_2, \varepsilon_2, U_2, X_3, \varepsilon_3, U_3,\cdots$ are independent of each other.
 Set first $\hat{X}_1=X_1$ and then define recursively for $n\ge 2$,
\be
\label{defhatX}
\hat{X}_n :=
\left\{
\begin{aligned}
	& X_n, & \text{if } \varepsilon_n = 0, \\
	& \hat{X}_{U_n}, & \text{if } \varepsilon_n =1.
\end{aligned}
\right.
\ee
The sequence of the partial sums
$$ \hat{S}(n) := \hat{X}_1 + \cdots + \hat{X}_n , \qquad n \ge 1 $$
is called a positively step-reinforced random walk or noise reinforced random walk.
In the above construction, at each time $n\ge 2$, with probability $p$, a positively step-reinforced random walk chooses one of the past steps uniformly at random
and repeats it, and otherwise (i.e., with probability $1-p$) it has an independent increment.
The parameter $p$ is called the reinforcement parameter.

The algorithm (\ref{defhatX}) was introduced by Simon \cite{S55} to explain the appearance of
power laws  in a wide range of empirical data.
Simon's algorithm is closely related to preferential attachment dynamics in complex networks.
 For more details, we refer to \cite{AB2002} and references therein.

When $X_1$ has the Rademacher law,  i.e., $\mathbb{P}(X_1=1)=\mathbb{P}(X_1=-1)=1/2$, ~$\hat{S}$
is a so-called elephant random walk. The elephant random walk was introduced by Sch\"{u}tz and Trimper \cite{SCH2004}. It
is a discrete-time nearest-neighbor random walk on integers $\mathbb{Z}$, which has a memory about the whole
past. It can be depicted as follows:
 Fix some $q\in [0,1]$ which is called the memory parameter. An elephant makes a first step in $\{-1,+1\}$. At each time $n\ge 2$, the elephant chooses
one of the past steps uniformly at random, then either repeats it with probability $q$, or makes
a step in the opposite direction with probability $1-q$.
The asymptotic behaviour of the elephant random walk has been extensively studied; see e.g. \cite{BB2016,BE2017,Col2017b,Col2017a,KT2019,KU2016}.
When $X_1$  has a symmetric stable distribution, $\hat{S}$ is a so-called shark random swim which has been studied in \cite{BU2018}.
When the distribution of $X_1$ is arbitrary, $\hat{S}$ has been studied in \cite{B2020B,B2020C,B2021A,B2021B}.

Throughout this paper, we write
$m_i=\mbox{E}(X_1^i)$ for $i=1,2$  and $\sigma^2=\mbox{Var}(X_1)=m_2-m_1^2.$
 Supposing that $\mathbb{E}(X_1^2)<\infty$, Bertoin \cite{B2021A} proved that the asymptotic behavior of the  positively step-reinforced random walk exhibits a phase transition similar to those for the elephant random walk and the shark random swim:
   in the super-diffusive regime ($p\in (1/2,1)$),~  $n^{-p}(\hat{S}_n-m_1n)$
converges in $L^2(\mathbb{P})$ to some non-degenerate random variable $L$;  in the diffusive regime ($p\in (0,1/2)$),
we have
\be
\Big\{\frac{\hat{S}([nt])-m_1nt}{\sigma\sqrt{n}},~ t\ge 0\Big\}\Longrightarrow  \big \{\hat{B}(t), ~t\ge 0\big\},  \label{wc1}
\ee
where $\Longrightarrow$ denotes weak convergence with respect to the Skorohod topology (see e.g. \cite{Bi1968}),   and $\hat{B}=\{\hat{B}(t), ~t\ge 0\}$ is a noise reinforced Brownian motion
with parameter $p$, which is a centered Gaussian process with covariance function
\bestar
\mathbb{E}(\hat{B}(s)\hat{B}(t))=\frac{t^ps^{1-p}}{1-2p}, ~~0\le s\le t.
\eestar
Let $\{W(t), t\ge 0\}$ be a standard Brownian motion, then
 $\hat{B}$ has
the same law as
\bestar
\Big\{\frac{t^{p}}{\sqrt{1-2p}} W( t^{1-2p}),~ t\ge 0\Big\}.
\eestar
In the critical regime  ($p=1/2$), Bertengui and Rosales-Ortiz \cite{BR2022} obtained that
\be
\Big\{\frac{\hat{S}([n^t])-m_1n^t}{\sigma \sqrt{n^t\log n}},~t\ge 0\Big\}  \Longrightarrow  \{W(t),~t\ge 0\}.   \label{wc2}
\ee

Recall that
 when $X_1$ has the Rademacher law,  $\hat{S}$ is an
elephant random walk with memory parameter $q=(p+1)/2\in [1/2,1]$ (see \cite{KU2016}).
To get  the corresponding generalisation of the
elephant random walk with memory parameter in the remaining range,
Bertoin \cite{B2020A} introduced the following counterbalancing algorithm. Set $\check{X}_1=X_1$ and define recursively for $n\ge 2$,
\be
\label{defcheckX}
\check{X}_n :=
\left\{
\begin{aligned}
	& X_n, & \text{if } \varepsilon_n = 0, \\
	& -\check{X}_{U_n}, & \text{if } \varepsilon_n = 1.
\end{aligned}
\right.
\ee
The process
$$ \check{S}(n) := \check{X}_1 + \cdots + \check{X}_n , \qquad n \ge 1 $$
is called a negatively step-reinforced random walk or counterbalanced random walk.
 When $X_1$ has the Rademacher law, $(\check{S}_n)_{n\ge 1}$ is an
elephant random walk with memory parameter $q=(1-p)/2\in [0, 1/2]$.
Bertoin \cite{B2020A} established the central limit theorem for $\check{S}_n$.
Bertengui and Rosales-Ortiz \cite{BR2022} obtained the functional central limit theorem:
for $p\in (0,1)$, if $m_1=0$ and $\mathbb{E}(X_1^2)<\infty$, then
\be
\Big\{\frac{\check{S}([nt])}{\sigma\sqrt{n}},~t\ge 0\Big\} \Longrightarrow  \{\check{B}(t), ~t\ge 0\},
  \label{wc3}
\ee
where $\check{B}=\{\check{B}(t), ~t\ge 0\}$ is  a centered Gaussian process with covariance function
\bestar
\mathbb{E}(\check{B}(s)\check{B}(t))=\frac{s^{1+p}t^{-p}}{2p+1}, ~~0\le s\le t.
\eestar
It is clear that
 $\check{B}$ has
the same law as
\bestar
\Big\{\frac{t^{-p}}{\sqrt{2p+1}} W( t^{2p+1}),~ t\ge 0\Big\}.
\eestar
Furthermore,
Bertengui and Rosales-Ortiz \cite{BR2022} obtained the joint functional central limit theorems for $(S_n, \hat{S}_n,\check{S}_n)$.

In the literature,
there are very few results on strong limit theorems for $\hat{S}_n$ and  $\check{S}_n$.
In this direction, Bertengui and Rosales-Ortiz \cite{BR2022} established the strong laws of large numbers  under the assumption of finite second moment. More
precisely, they showed that if $\mathbb{E}(X_1^2)<\infty$, then $n^{-1}\hat{S}_n\rightarrow m_1~a.s.$ and $n^{-1}\check{S}_n\rightarrow (1-p)m_1/(1+p)~a.s.$
In fact, the hypothesis   $\mathbb{E}(X_1^2)<\infty$ can be
weakened.
In this paper, we begin by providing the strong laws of large numbers  under the minimal moment condition $\mathbb{E}(|X_1|)<\infty$.

Note that for $p=0$, both $\hat{S}$ and $\check{S}$ are just the classical random walk with i.i.d. steps,
and for $p=1$, we have $\hat{S}_n=nX_1$ for all $n\ge 1$ and the distribution of $\check{S}$ can be expressed explicitly in terms of Eulerian numbers (see e.g. \cite{B2020A}).
Our attention is focussed on the case  $p\in (0,1)$.

\begin{thm}
	\label{LLN}
	If $\mathbb{E}|X_1|<\infty$, then for any $ p \in (0,1) $, we have
	\bestar	
	\frac{\hat{S}(n)}{n} \rightarrow m_1~~a.s. ~~~~\mbox{and}~~~~\frac{\check{S}(n)}{n} \rightarrow \frac{1-p}{1+p}m_1~~a.s.
	\eestar
\end{thm}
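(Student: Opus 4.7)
The plan is to decompose each reinforced walk via a weighted P\'olya-urn martingale and to handle the $L^1$ moment hypothesis by truncating each innovation at an index-dependent level. I describe the strategy for $\hat S$; the case of $\check S$ requires only changing the weights and the drift fixed point. Starting from $\mathbb E[\hat X_n \mid \mathcal F_{n-1}] = (1-p)m_1 + p\hat S(n-1)/(n-1)$, I obtain the affine recursion $\mathbb E[\hat S(n)\mid\mathcal F_{n-1}] = \frac{n-1+p}{n-1}\hat S(n-1) + (1-p)m_1$. With the weights $a_n := \Gamma(n)\Gamma(1+p)/\Gamma(n+p) \sim \Gamma(1+p)n^{-p}$, which satisfy $a_n(n-1+p)/(n-1) = a_{n-1}$, the process $M_n := \sum_{k=2}^n a_k D_k$ with $D_k := \hat X_k - \mathbb E[\hat X_k\mid\mathcal F_{k-1}]$ is a centered martingale, and
\begin{equation*}
a_n\hat S(n) = a_1\hat X_1 + M_n + (1-p)m_1 \sum_{k=2}^n a_k.
\end{equation*}
Since $na_n \sim \Gamma(1+p)n^{1-p} \sim (1-p)\sum_{k=1}^n a_k$, the drift contributes $m_1$ in the limit, so the SLLN reduces to showing $M_n/(na_n) \to 0$ almost surely.

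To handle the missing second moment, I truncate at the index: $Y_k := X_k\mathbf 1_{\{|X_k|\le k\}}$ and $Z_k := X_k - Y_k$. Since construction (\ref{defhatX}) is linear in the input once $(\varepsilon_k, U_k)$ are fixed, one obtains the decomposition $\hat S(n) = \hat Y(n) + \hat Z(n)$. For the tail piece, Borel--Cantelli applied to $\sum_k\mathbb P(|X_k|>k) \le \mathbb E|X_1| < \infty$ yields $Z_k = 0$ for all but finitely many $k$, a.s. Writing $\hat Z(n) = \sum_k N_{k,n} Z_k$, where $N_{k,n}$ counts the descendants of the $k$-th innovation up to time $n$, a direct P\'olya-urn calculation shows that $a_n N_{k,n}$ is a positive martingale for each fixed innovation index $k$; hence $N_{k,n}/n^p$ converges a.s.\ to a finite random variable, and the finite remaining sum gives $|\hat Z(n)| = O(n^p) = o(n)$ a.s.

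For the truncated piece $\hat Y$, the same martingale construction applied to $(Y_k)$ produces $a_n\hat Y(n) = a_1\hat Y_1 + M_n^Y + (1-p)\sum_{k=2}^n a_k\mathbb E Y_k$ with $M_n^Y = \sum_k a_k D_k^Y$ and $D_k^Y := \hat Y_k - \mathbb E[\hat Y_k\mid\mathcal F_{k-1}]$. Writing $\hat Y_k = Y_{K_k}$ with chain-root index $K_k \le k$ independent of $(Y_j)_j$, I get
\begin{equation*}
\mathbb E(D_k^Y)^2 \le \mathbb E\hat Y_k^2 \le \mathbb E\bigl[X_1^2\mathbf 1_{\{|X_1|\le k\}}\bigr],
\end{equation*}
and a Fubini computation yields $\sum_k \mathbb E[X_1^2\mathbf 1_{\{|X_1|\le k\}}]/k^2 \le 2\mathbb E|X_1| < \infty$. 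Hence $\sum_k D_k^Y/k$ is an $L^2$-bounded martingale, converges a.s.\ by Doob, and Kronecker's lemma with $b_n := na_n \nearrow \infty$ gives $M_n^Y/(na_n)\to 0$ a.s. Combined with $\mathbb E Y_k \to m_1$ (dominated convergence) and a Ces\`aro argument for the drift, this yields $\hat Y(n)/n \to m_1$ a.s., completing the first claim.

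For $\check S$, the conditional expectation identity becomes $\mathbb E[\check S(n)\mid\mathcal F_{n-1}] = \frac{n-1-p}{n-1}\check S(n-1) + (1-p)m_1$, whose unique constant fixed point is precisely $(1-p)m_1/(1+p)$. Replacing $a_n$ by $b_n := \Gamma(n)\Gamma(1-p)/\Gamma(n-p) \sim \Gamma(1-p)n^p$, the three-part argument transfers verbatim: the pointwise identity $|\check X_k| = |\hat X_k|$ keeps the cluster-size bound on the tail unchanged, and the martingale $L^2$ estimate is identical. The expected main obstacle is this martingale control under truncation: a bare first-moment bound $\mathbb E|\hat Z(n)|/n \le \mathbb E|X_1|\mathbf 1_{\{|X_1|>c\}}$ does not upgrade to almost-sure smallness, so the key saving is that truncating at the index $k$ converts the $L^1$ hypothesis into the summable quantity $\sum_k\mathbb E[X_1^2\mathbf 1_{\{|X_1|\le k\}}]/k^2$ via Fubini, which supplies just enough quadratic control for Kronecker's lemma.
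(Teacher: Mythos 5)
Your proof is correct and follows essentially the same route as the paper: truncation of $X_k$ at level $k$, the Fubini bound $\sum_k k^{-2}\mathbb{E}[X_1^2\mathbf{1}_{\{|X_1|\le k\}}]\le 2\mathbb{E}|X_1|$, the weighted martingale with weights $\Gamma(n)\Gamma(1\pm p)/\Gamma(n\pm p)$, and martingale convergence plus Kronecker's lemma. The only (harmless) variations are cosmetic: you control the tail piece via the positive martingale $a_nN_{k,n}$, giving $N_{k,n}=O(n^p)$ directly and bounding $|\Delta_{k}(n)|\le N_{k,n}$, where the paper uses a supermartingale argument with a $p$-dependent normalization and a separate lemma for $\Delta_j(n)$; and you bound $\mathbb{E}\hat Y_k^2$ via the root-index representation rather than the paper's induction.
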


The main aim of this paper is to establish the strong invariance principles for  $\hat{S}$ and $\check{S}$ when $\mathbb{E}(X_1^2)<\infty$. At the same time, we also get
the uniform weak invariance principles.

\begin{thm} \label{th4.1} Suppose that $\mathbb{E}(X_1^2)<\infty$. If $p\in (0, 1/2)$, then there exists  a richer probability space with a standard Brownian motion $\{W(t), t\ge 0\}$ such that
\be
 \frac{1}{\sqrt{n^{1-2p}\log\log n}}\Big|\frac{\sqrt{1-2p}}{\sigma}\frac{\hat{S}(n)-m_1n}{n^{p}}-W(n^{1-2p})\Big|{\rightarrow} 0~~a.s.,  \label{th4.1eq1}
\ee
and
\be
 \max_{0\le t\le 1}\Big|\frac{\sqrt{1-2p}}{\sigma}\frac{\hat{S}([nt])-m_1[nt]}{\sqrt{n}}-\frac{t^{p} W( (nt)^{1-2p})}{\sqrt{n^{1-2p}}}\Big|\stackrel{P}{\rightarrow} 0.
 \label{th4.1eq2}
\ee
If $p=1/2$, then there exists  a richer probability space with a standard Brownian motion $\{W(t), t\ge 0\}$ such that
\be
\frac{1}{\sqrt{\log n\log\log \log n}} \Big|\frac{\hat{S}(n)-m_1n}{\sigma \sqrt{n}}-{W}(\log n)\Big|\rightarrow 0~~a.s.,
\label{th4.1eq7}
\ee
and
\be
\frac{1}{\sqrt{\log n}} \max_{1/n \le t\le 1}\Big|\frac{\hat{S}([nt])-m_1[nt]}{\sigma \sqrt{[nt]}}-{W}(\log (nt))\Big| \stackrel{P}{\rightarrow} 0.
\label{th4.1eq8}
\ee
\end{thm}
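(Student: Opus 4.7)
The approach is built around the Doob decomposition of $\hat{S}(n)$. Setting $\mathcal{F}_n=\sigma(X_i,\varepsilon_i,U_i\colon i\le n)$, the recursive definition (\ref{defhatX}) immediately yields $\mathbb{E}[\hat{X}_{n+1}\mid\mathcal{F}_n]=(1-p)m_1+(p/n)\hat{S}(n)$, so that $Y_n:=\hat{S}(n)-m_1 n$ satisfies $\mathbb{E}[Y_{n+1}\mid\mathcal{F}_n]=(1+p/n)Y_n$. With $a_n:=\Gamma(n+p)/(\Gamma(n)\Gamma(1+p))\sim n^p/\Gamma(1+p)$, the normalized process $M_n:=Y_n/a_n$ is a martingale, and Theorem \ref{th4.1} reduces to a strong invariance principle for $M_n$ together with a time-change and the asymptotics of $a_n$.

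The first step is to pin down the predictable quadratic variation. The martingale differences are $D_k=(\hat{X}_k-\mathbb{E}[\hat{X}_k\mid\mathcal{F}_{k-1}])/a_k$, whose conditional second moments are deterministic functions of $\hat{S}(k-1)/(k-1)$ and of $(k-1)^{-1}\sum_{i<k}\hat{X}_i^2$. Applying Theorem \ref{LLN} to $X_1$ and, after observing that $(\hat{X}_n^2)_n$ is itself the positively step-reinforced sequence built from $(X_n^2)_n$, to $X_1^2$ as well, both quantities converge a.s.\ to $m_1$ and $m_2$, whence $\mathrm{Var}(\hat{X}_k\mid\mathcal{F}_{k-1})\to\sigma^2$ a.s. Summing against $a_k^{-2}\sim\Gamma(1+p)^2 k^{-2p}$ yields $\langle M\rangle_n\sim \sigma^2\Gamma(1+p)^2 n^{1-2p}/(1-2p)$ when $p<1/2$ and $\langle M\rangle_n\sim\sigma^2\Gamma(3/2)^2\log n$ when $p=1/2$.

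The second step is the strong approximation of $M_n$. Since only $\mathbb{E}X_1^2<\infty$ is assumed, the $D_k$ carry no higher moments and the general martingale strong-invariance theorems alluded to in the abstract cannot be applied directly; one has to truncate. Splitting each $X_k$ at a level $b_k$ and propagating the split to $\hat{X}_k$, the threshold $b_k$ is chosen so that the centered bounded martingale formed by the small part satisfies the moment hypothesis of the chosen invariance theorem, while a Kronecker/Borel--Cantelli argument controls the discarded contribution at a rate $o(\sqrt{\langle M\rangle_n\log\log\langle M\rangle_n})$. The invariance theorem then supplies, on an enlarged probability space, a standard Brownian motion $W_1$ with $|M_n-W_1(\langle M\rangle_n)|=o(\sqrt{\langle M\rangle_n\log\log\langle M\rangle_n})$ almost surely.

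Combining this with $a_n\sim n^p/\Gamma(1+p)$ and the asymptotics of $\langle M\rangle_n$ from Step~1, Brownian self-similarity recasts $W_1(\langle M\rangle_n)$ as $(\sigma/\sqrt{1-2p})\,W(n^{1-2p})$ for a new standard Brownian motion $W$, which delivers (\ref{th4.1eq1}); the critical scaling $W(\log n)$ in (\ref{th4.1eq7}) appears by the same mechanism after absorbing the $\Gamma(3/2)^2$ factor. The uniform weak principles (\ref{th4.1eq2}) and (\ref{th4.1eq8}) then follow by applying the strong approximation along a dyadic grid together with the modulus of continuity of $W$ and a negligibility estimate for the maximal oscillation of $Y_{[nt]}-Y_{[ns]}$ on short intervals. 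I expect the main obstacle to lie in Step~2: the threshold $b_k$ must be tuned to satisfy simultaneously the moment hypothesis of the martingale invariance theorem and the LIL-type error budget, a balance that is particularly tight in the critical regime $p=1/2$, where $\langle M\rangle_n$ grows only logarithmically and the admissible error correspondingly shrinks to $o(\sqrt{\log n\,\log\log\log n})$.
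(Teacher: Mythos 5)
Your proposal follows essentially the same route as the paper: the same martingale $\hat a_n(\hat S(n)-m_1n)$ with $\hat a_n^{-1}=\Gamma(n+p)/(\Gamma(n)\Gamma(1+p))$, the same quadratic-variation asymptotics $\sigma^2\Gamma(1+p)^2n^{1-2p}/(1-2p)$ (resp.\ $\sigma^2\Gamma(3/2)^2\log n$), truncation of the underlying steps at level $\sqrt{k}$ to gain the fourth-moment summability needed for the Skorohod-embedding invariance principle, and the same time-change and Brownian-scaling endgame. The one place your sketch underestimates the work is the ``discarded contribution'': each large step $X_j$ is replicated $N_j(n)$ times by the reinforcement, so beyond Borel--Cantelli one needs the almost sure bounds $N_j(n)=o(\sqrt n)$ for $p<1/2$ and $N_j(n)=o(\sqrt{n\log n})$ for $p=1/2$, which the paper supplies via a supermartingale argument in Lemma~\ref{Njn}.
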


\begin{thm} \label{th4.2} Suppose that $\mathbb{E}(X_1^2)<\infty$ and $p\in (0,1)$. Write
\be
\check{\mu}=\frac{(1-p)m_1}{1+p},~~\check{\sigma}^2=m_2-\Big(\frac{(1-p)m_1}{1+p}\Big)^2. \label{notation1}
\ee
 Then there exists  a richer probability space with a standard Brownian motion $\{W(t), t\ge 0\}$ such that
\bestar
 \frac{1}{\sqrt{n^{2p+1}\log\log n}}\Big|\frac{\sqrt{2p+1}}{\check{\sigma}}\frac{\check{S}(n)-\check{\mu}n}{n^{-p}}-W(n^{2p+1})\Big|{\rightarrow} 0~~a.s.,
\eestar
and
\be
 \max_{0\le t\le 1}\Big|\frac{\sqrt{2p+1}}{\check{\sigma}}\frac{\check{S}([nt])-\check{\mu}[nt]}{\sqrt{n}}-\frac{t^{-p}W((nt)^{2p+1})}{\sqrt{n^{2p+1}}}\Big|\stackrel{P}{\rightarrow} 0.  \label{unif1}
\ee
\end{thm}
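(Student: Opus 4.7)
The plan is to reduce Theorem \ref{th4.2} to the general martingale strong (and functional) invariance principles announced in the introduction, via a natural martingale attached to $\check{S}(n)-\check{\mu}n$. Let $\mathcal{F}_n=\sigma(X_1,\ldots,X_n,\varepsilon_2,U_2,\ldots,\varepsilon_n,U_n)$. A direct computation from (\ref{defcheckX}) gives
\bestar
\mathbb{E}(\check{X}_{n+1}\mid \mathcal{F}_n)=(1-p)m_1-\frac{p}{n}\check{S}(n),
\eestar
whence, using the identity $(1+p)\check{\mu}=(1-p)m_1$ from (\ref{notation1}),
\bestar
\mathbb{E}\bigl(\check{S}(n+1)-\check{\mu}(n+1)\mid \mathcal{F}_n\bigr)=(1-p/n)\bigl(\check{S}(n)-\check{\mu}n\bigr).
\eestar
Setting $a_n:=\prod_{k=1}^{n-1}(1-p/k)^{-1}=\Gamma(n)\Gamma(1-p)/\Gamma(n-p)\sim \Gamma(1-p)\,n^{p}$, the sequence $M_n:=a_n(\check{S}(n)-\check{\mu}n)$ is therefore an $(\mathcal{F}_n)$-martingale with differences $\Delta M_n=a_{n+1}\xi_{n+1}$, where $\xi_{n+1}:=\check{X}_{n+1}-\mathbb{E}(\check{X}_{n+1}\mid \mathcal{F}_n)$.

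A key observation for the variance analysis is that squaring removes the sign flip in (\ref{defcheckX}), so the sequence $(\check{X}_i^{2})_{i\ge 1}$ satisfies the \emph{positively}-reinforced recursion (\ref{defhatX}) driven by the i.i.d.\ base $Y_i:=X_i^{2}$ with mean $m_2$. Theorem \ref{LLN} applied to this companion walk gives $n^{-1}\sum_{i=1}^{n}\check{X}_i^{2}\to m_2$ a.s., and combined with $\check{S}(n)/n\to\check{\mu}$ a.s.\ (also from Theorem \ref{LLN}) a short calculation yields
\bestar
\mathbb{E}(\xi_{n+1}^{2}\mid \mathcal{F}_n)\longrightarrow m_2-\check{\mu}^{2}=\check{\sigma}^{2}\qquad \text{a.s.},
\eestar
and hence $V_n:=\sum_{k=1}^{n-1}\mathbb{E}((\Delta M_k)^{2}\mid \mathcal{F}_k)\sim \Gamma(1-p)^{2}\check{\sigma}^{2}\,n^{2p+1}/(2p+1)$ a.s.

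With the martingale structure, asymptotic conditional variance, and finite second moments of the increments under control (after the truncation step signalled in the introduction to promote a second-moment hypothesis to the conditions required by the general martingale SIP), one produces, on a richer probability space, a standard Brownian motion $\tilde{W}$ such that $M_n=\tilde{W}(V_n)+o(\sqrt{V_n\log\log V_n})$ a.s. Dividing by $a_n$ and applying the deterministic Brownian rescaling
\bestar
W(t):=\frac{\sqrt{2p+1}}{\Gamma(1-p)\check{\sigma}}\,\tilde{W}\!\left(\frac{\Gamma(1-p)^{2}\check{\sigma}^{2}}{2p+1}\,t\right),
\eestar
which is again a standard Brownian motion, converts this into the first assertion of Theorem \ref{th4.2}. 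The uniform-in-$t$ statement (\ref{unif1}) is obtained along the same lines from the functional/maximal version of the martingale invariance principle (the second general theorem promised in the introduction) after the same rescaling, with the boundary region $t\downarrow 0$ controlled by the crude bound $\mathbb{E}(\check{S}([nt])-\check{\mu}[nt])^{2}=O(nt)$.

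The main technical obstacle is passing from $\tilde{W}(V_n)$ to $W(n^{2p+1})$ within the required $o(\sqrt{n^{2p+1}\log\log n})$ error. This demands a quantitative rate, rather than just $o(n^{2p+1})$, for the fluctuation $V_n-\Gamma(1-p)^{2}\check{\sigma}^{2}n^{2p+1}/(2p+1)$, and therefore sharper-than-LLN rates for both $\sum_{i\le n}(\check{X}_i^{2}-m_2)$ and $\check{S}(n)-\check{\mu}n$. These sharper rates can be extracted by applying the same martingale machinery to the companion walk attached to $\check{X}_i^{2}$ and then converting the time error into an additive Brownian error via Levy's modulus of continuity; carrying this coupling out carefully is where the bulk of the work lies.
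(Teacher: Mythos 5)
Your martingale skeleton is exactly the paper's: the same conditional expectation identity, the same normalising sequence $\check{a}_n=\Gamma(n)\Gamma(1-p)/\Gamma(n-p)\sim\Gamma(1-p)n^{p}$, the same identification of the conditional variance limit $\check{\sigma}^2$ via the observation that the squares follow the positively reinforced recursion, and the same final Brownian rescaling. But there is a genuine gap at the point you wave at with ``after the truncation step signalled in the introduction.'' The general martingale theorem (Theorem \ref{th1a}) requires conditions (ii) and (iii), namely $\sum_k s_k^{-1}\mathbb{E}(|Y_k|I(|Y_k|>cs_k))<\infty$ and $\sum_k s_k^{-4}\mathbb{E}(Y_k^4I(|Y_k|\le cs_k))<\infty$, and for the \emph{untruncated} differences $Y_k=\check{a}_k\xi_k$ these fail under $\mathbb{E}(X_1^2)<\infty$ alone: condition (ii) reduces by Fubini to something of the order $\mathbb{E}(X_1^2\log_+|X_1|)<\infty$, and the crude bound $\mathbb{E}(Y_k^4I(|Y_k|\le cs_k))\le c^2s_k^2\mathbb{E}(Y_k^2)$ for (iii) gives a harmonic series. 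So the reduction to the general theorem does not go through as stated, and truncating the martingale differences at level $cs_k$ (which is what the theorem's hypotheses are about) is not the fix.

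What the paper actually does is truncate the underlying i.i.d.\ inputs, setting $Z_n=X_nI(|X_n|\le\sqrt{n})$ \emph{before} running the reinforcement algorithm, and builds the martingale from the resulting walk $\check{S}^*(n)$. Two nontrivial ingredients are then needed that your proposal omits entirely: (a) the moment comparison $\mathbb{E}(\check{Z}_n^{4})\le\mathbb{E}(Z_n^{4})=\mathbb{E}(X_1^4I(|X_1|\le\sqrt{n}))$ (Lemma \ref{lemmamoment}), which is what converts the second-moment hypothesis on $X_1$ into the fourth-moment condition (iii); and (b) a transfer argument back to $\check{S}(n)$. The transfer is not a matter of finitely many $O(1)$ corrections: each index $j$ with $Z_j\ne X_j$ (there are a.s.\ finitely many, by Borel--Cantelli) contributes $\Delta_j(n)(X_j-Z_j)$ to $\check{S}(n)-\check{S}^*(n)$, where $\Delta_j(n)$ is the signed occupation count of the $j$-th step, and one must prove $\Delta_j(n)=o(\sqrt{n})$ a.s.\ for fixed $j$ (Lemma \ref{deltajn}, itself a small martingale argument) before Lemma \ref{lemma7} makes the difference negligible at scale $\sqrt{n}$. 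Conversely, the step you flag as the main obstacle --- replacing $\tilde{W}(V_n)$ by $W(n^{2p+1})$ --- needs no quantitative rate: $T_n/s_n^2\to1$ a.s.\ combined with the increment estimate $\lim_{r\downarrow1}\limsup_t\sup_{t/r\le u\le rt}|W(u)-W(t)|/\sqrt{t\log\log t}=0$ already yields an $o(\sqrt{s_n^2\log\log s_n})$ error, so the difficulty sits exactly where your proposal is vaguest, not where you place it.
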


\begin{rem} Suppose that $\mathbb{E}(X_1^2)<\infty$ and $p\in (0,1)$.
For any $N\in \mathbb{N}$, a similar argument as in the proof of Theorem  \ref{th4.2} gives that (\ref{unif1}) still holds  by replacing $\max\limits_{0\le t\le 1}$ by $\max\limits_{0\le t\le N}$.
From this it follows immediately that  for any $N\in \mathbb{N}$,
\bestar
\Big\{\frac{\check{S}([nt])-\check{\mu}[nt]}{\check{\sigma}\sqrt{n}},~0\le t\le N\Big\} \Longrightarrow  \Big\{\frac{t^{-p}W(t^{2p+1})}{\sqrt{2p+1}}, ~ 0\le t\le N\Big\}.
\eestar
Hence (see \cite{Bi1968})
\be
\Big\{\frac{\check{S}([nt])-\check{\mu}[nt]}{\check{\sigma}\sqrt{n}},~t\ge 0\Big\} \Longrightarrow  \Big\{\frac{t^{-p}W(t^{2p+1})}{\sqrt{2p+1}}, ~  t\ge 0\Big\}.  \label{add2}
\ee
This implies (\ref{wc3}) when $m_1=0$.  Similarly, we can obtain the functional central limit theorems (\ref{wc1}) and (\ref{wc2}) from Theorem \ref{th4.1}.
Moreover, it follows from (\ref{th4.1eq8}) that for $p=1/2$,
\be
\frac{1}{\sqrt{\log n}} \max_{0 \le t\le 1}\Big|\frac{\hat{S}([nt])-m_1[nt]}{\sigma \sqrt{n}}-\sqrt{t}{W}(\log n)\Big| \stackrel{P}{\rightarrow} 0.  \label{add}
\ee
The proof of (\ref{add})  will be given in Section \ref{sect4}.
Using similar argument as in the proof of (\ref{add2}) gives that for $p=1/2$, we also have
\bestar
\Big\{\frac{\hat{S}([nt])-m_1[nt]}{\sigma \sqrt{n\log n}},~t\ge 0\Big\}  \Longrightarrow  \{\sqrt{t} W(1),~t\ge 0\}.
\eestar
\end{rem}

By applying Theorems \ref{th4.1} and \ref{th4.2} and  the law of the iterated logarithm for $\{W(t), t \ge 0\}$,  we can also get the laws of the iterated logarithm for $\hat{S}_n$ and  $\check{S}_n$, which are
stated as follows without proof.

\begin{cor}  Suppose that $\mathbb{E}(X_1^2)<\infty$. Define $\check{\mu}$ and $\check{\sigma}^2$ as in (\ref{notation1}).
\begin{enumerate}

\item For $p\in (0, 1/2)$, we have
\bestar
\limsup_{n\rightarrow\infty}\frac{\hat{S}(n)-m_1n}{\sqrt{2n\log\log n}}=\frac{\sigma}{\sqrt{1-2p}}~~~~a.s.
\eestar
\item  For $p=1/2$, we have
\bestar
\limsup_{n\rightarrow\infty}\frac{\hat{S}(n)-m_1n}{\sqrt{2n\log n \log \log\log n}}=\sigma~~~~a.s.
\eestar
\item For $p\in (0,1)$, we have
\bestar
\limsup_{n\rightarrow\infty}\frac{\check{S}(n)-\check{\mu}n}{\sqrt{2n\log\log n}}=\frac{\check{\sigma}}{\sqrt{2p+1}}~~~~a.s.
\eestar
\end{enumerate}
\end{cor}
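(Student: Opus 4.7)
All three statements are corollaries of the strong invariance principles in Theorems \ref{th4.1} and \ref{th4.2}, combined with the classical law of the iterated logarithm for a standard Brownian motion $\{W(t), t\ge 0\}$, namely
$$\limsup_{t \to \infty} \frac{W(t)}{\sqrt{2t \log\log t}} = 1 \quad \text{a.s.}$$
The strategy is to use the strong approximation to replace the reinforced walk by an explicit time-changed Brownian motion up to an error that is $o(\sqrt{n\log\log n})$, and then to apply the LIL along the appropriate polynomial or logarithmic time scale.

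For part (1), equation (\ref{th4.1eq1}) may be rewritten as
$$\hat{S}(n) - m_1 n = \frac{\sigma}{\sqrt{1-2p}}\, n^{p}\, W(n^{1-2p}) + o\!\left(\sqrt{n \log\log n}\right) \quad \text{a.s.},$$
using the identity $n^{p}\sqrt{n^{1-2p}} = \sqrt{n}$. Dividing by $\sqrt{2n \log\log n}$ and using $\log\log(n^{1-2p}) \sim \log\log n$, the claim reduces to showing
$$\limsup_{n\to\infty} \frac{W(n^{1-2p})}{\sqrt{2 n^{1-2p} \log\log n}} = 1 \quad \text{a.s.}$$
Part (3) follows by an identical argument, with $1-2p$ replaced by $2p+1$, $\sigma$ by $\check{\sigma}$, and $m_1 n$ by $\check{\mu} n$, using Theorem \ref{th4.2} in place of (\ref{th4.1eq1}). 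For part (2), (\ref{th4.1eq7}) gives $\hat{S}(n) - m_1 n = \sigma\sqrt{n}\, W(\log n) + o(\sqrt{n \log n \log\log\log n})$ a.s.; dividing by $\sqrt{2n\log n \log\log\log n}$ and applying the classical LIL to $W(t)$ at $t = \log n$, for which $\log\log t = \log\log\log n$, produces the constant $\sigma$.

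The only subtle point is the \emph{sub-sequential} LIL along $\{n^{1-2p}\}$, $\{n^{2p+1}\}$, and $\{\log n\}$, since a priori the continuous-time limsup need not be realized along an arbitrary discrete subsequence. The upper bound is immediate. For the matching lower bound, in each such subsequence $\tau_n$ one has $\tau_{n+1}/\tau_n \to 1$, and moreover $\tau_{n+1}-\tau_n$ is of much smaller order than $\tau_n/\log\log \tau_n$; by extracting an exponentially spaced sub-sub-sequence $\{\tau_{n_k}\}$ one recovers the standard independent-increments Borel--Cantelli argument, which yields the lower bound $\ge 1$. This is essentially the only obstacle; beyond it the three statements are immediate consequences of the strong invariance principles.
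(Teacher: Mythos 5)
Your proposal is correct and follows exactly the route the paper intends: the corollary is stated there without proof precisely because it is the strong invariance principles (\ref{th4.1eq1}), (\ref{th4.1eq7}) and Theorem \ref{th4.2} combined with the law of the iterated logarithm for $W$, which is what you do. Your extra care with the sub-sequential LIL along $\{n^{1-2p}\}$, $\{n^{2p+1}\}$ and $\{\log n\}$ is sound (ratios of consecutive terms tend to $1$, so the geometric-subsequence lower-bound argument applies) and fills in the only detail the paper leaves implicit.
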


We will prove our main results  by combining the martingale method and a truncation argument.
The martingale method has been successfully applied to proving limit theorems for the elephant random walk; see e.g. \cite{BE2017,Col2017a,Fan2022}. 
However, in the case of the elephant random walk, some properties of the martingale rely on the boundedness of $X_1$. In order to get the corresponding limit theorems for $\hat{S}_n$ and $\check{S}_n$ with general steps, it is necessary to adopt the truncation technique.  A special truncation argument was used in \cite{B2021A, BR2022} 
to obtain the functional central limit theorems for $\hat{S}_n$ and $\check{S}_n$ (see Section 4.3 in \cite{B2021A} and Section 4 in \cite{BR2022}). But this  is not applicable to strong limit theorems. In this work, we find a different truncation argument.

The rest of this paper is organized as follows. Section \ref{sect2} presents two general theorems on invariance principle for  martingale difference sequences.
The proofs of the results in Section \ref{sect2} are offered in Section \ref{sect3}. Section \ref{sect4}  is devoted to the proofs of Theorems \ref{LLN}$-$\ref{th4.2}.

\section{Invariance principles for  martingale difference sequences} \label{sect2}

\begin{thm} \label{th1a}
Let $\{Y_n, \mathscr{F}_n, n\ge 1\}$ be a martingale difference sequence with finite second moments, and fix any $c>0$.
Write  $M_n=\sum_{k=1}^n Y_k$ and ~$s_n^2=\mbox{Var}(M_n).
$
Suppose  $s_n\rightarrow \infty$  and the following conditions hold:
\begin{enumerate}[itemindent=2em]
 \item[$(i)$.]  $s_n^{-2}\sum_{k=1}^n Y_k^2 \rightarrow 1~a.s.$; 
 \item[$(ii)$.]  $\sum_{k=1}^{\infty} s_k^{-1} \mathbb{E}(|Y_k|I(|Y_k|>cs_k))<\infty$;
 \item[$(iii)$.]  $\sum_{k=1}^{\infty} s_k^{-4} \mathbb{E}(Y_k^4I(|Y_k|\le cs_k))<\infty$.
\end{enumerate}
Then 
there exists  a richer probability space with a standard Brownian motion $\{W(t), t\ge 0\}$ such that
\be
\frac{1}{\sqrt{s_n^2\log\log s_n}}|M_{n}-W(s^2_{n})|\to 0~~a.s., \label{asinva2}
\ee
and
\be
\frac{1}{s_n}\max\limits_{0\le t\le 1} \Big|M_{[nt]}-W(s^2_{[nt]})\Big|\stackrel{P}{\longrightarrow} 0.
\label{weakinva2}
\ee
\end{thm}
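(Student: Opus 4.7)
The natural approach is a martingale Skorokhod embedding preceded by a truncation tailored to hypotheses $(ii)$ and $(iii)$.

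\emph{Step 1 (truncation).} Split $Y_k = Y_k' + Y_k''$ where $Y_k' := Y_k\, I(|Y_k|\le c s_k)$. By $(ii)$ the series $\sum_k Y_k''/s_k$ converges absolutely a.s., so Kronecker's lemma gives $s_n^{-1}\sum_{k\le n} Y_k'' \to 0$ a.s. Centering, $\tilde Y_k := Y_k' - \mathbb{E}[Y_k'\mid\mathscr{F}_{k-1}]$ is a bounded MDS with $|\tilde Y_k|\le 2c s_k$, and $\tilde M_n := \sum_{k\le n}\tilde Y_k$ differs from $M_n$ by $o(s_n)$ a.s. It thus suffices to prove both conclusions with $M_n$ replaced by $\tilde M_n$.

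\emph{Step 2 (Skorokhod embedding).} On an enlarged probability space, the martingale Skorokhod embedding (Hall and Heyde, Appendix A.1) yields a standard Brownian motion $W$ and stopping times $0 = T_0 \le T_1 \le \cdots$ in the filtration $(\mathscr{H}_k)$ generated by $W$, such that $\tilde M_n = W(T_n)$ and the increments $\tau_k := T_k - T_{k-1}$ satisfy
\begin{equation*}
\mathbb{E}[\tau_k\mid\mathscr{H}_{k-1}] = \mathbb{E}[\tilde Y_k^2\mid\mathscr{F}_{k-1}], \qquad \mathbb{E}[\tau_k^2\mid\mathscr{H}_{k-1}] \le L\,\mathbb{E}\bigl[Y_k^4\, I(|Y_k|\le c s_k)\mid\mathscr{F}_{k-1}\bigr]
\end{equation*}
for a universal constant $L$.

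\emph{Step 3 (clock alignment).} Decompose
\begin{equation*}
T_n - s_n^2 = \Bigl(T_n - \sum_{k\le n}\mathbb{E}[\tau_k\mid\mathscr{H}_{k-1}]\Bigr) + \Bigl(\sum_{k\le n}\mathbb{E}[\tilde Y_k^2\mid\mathscr{F}_{k-1}] - \sum_{k\le n}\tilde Y_k^2\Bigr) + \Bigl(\sum_{k\le n}\tilde Y_k^2 - s_n^2\Bigr).
\end{equation*}
The first two pieces are MDS sums whose conditional second moments are bounded (up to constants) by $\mathbb{E}[Y_k^4\, I(|Y_k|\le c s_k)\mid\mathscr{F}_{k-1}]$; condition $(iii)$ makes $\sum_k s_k^{-4}\mathbb{E}[\cdot]<\infty$, and Chow's strong law yields that each of the first two pieces is $o(s_n^2)$ a.s. The third piece reduces, after absorbing the truncation errors via $(ii)$, to $s_n^{-2}\sum_{k\le n} Y_k^2 \to 1$ from $(i)$. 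Together, $T_n / s_n^2 \to 1$ a.s.

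\emph{Step 4 (Brownian modulus and uniformity).} The Cs\"{o}rg\H{o}--R\'ev\'esz increment inequality for $W$ on $[0, 2s_n^2]$ gives $|W(T_n) - W(s_n^2)| \le C\sqrt{|T_n - s_n^2|\log(s_n^2/|T_n - s_n^2|)}$ a.s., which combined with Step 3 yields $(\ref{asinva2})$. For the uniform in-probability bound $(\ref{weakinva2})$, I apply the same modulus estimate to $\max_{k\le n}|T_k - s_k^2|$, this maximum being $o_P(s_n^2)$ by Doob's maximal inequality applied to each of the three pieces of Step 3, combined with a dyadic blocking that separately controls the regimes $s_k \ll s_n$ and $s_k \asymp s_n$. \emph{The main obstacle} is Step 3: the decomposition above is standard in outline, but carefully extracting the a.s.\ (rather than merely in-probability) alignment, and doing so uniformly over $k \le n$ in order to also obtain $(\ref{weakinva2})$, requires exploiting the \emph{summability} --- not just the finiteness --- of the series in $(ii)$ and $(iii)$, so as to apply Chow's strong law with the sharp normalisation.
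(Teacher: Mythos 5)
Your proposal is correct and follows essentially the same route as the paper's proof: truncation at $cs_k$ with conditional centering controlled via $(ii)$ and Kronecker's lemma, the Hall--Heyde Skorokhod embedding, alignment $T_n/s_n^2\to 1$ a.s.\ via martingale strong laws under $(iii)$ together with $(i)$, and Brownian increment/modulus-of-continuity estimates for both conclusions. The only cosmetic differences are that the paper obtains $\sup_{k\le n}|T_k-s_k^2|=o(s_n^2)$ a.s.\ directly from the monotonicity of $s_k$ (no Doob inequality or dyadic blocking needed) and uses Kallenberg's Lemma 14.7 in place of the Cs\"{o}rg\H{o}--R\'ev\'esz increment bound, whose statement in your Step 4 is missing a $\log\log$ term that in any case does not affect the conclusion.
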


\begin{rem} \label{rem2.1}
For any $r\in [1,4]$, if $\mathbb{E}(|Y_n|^r)<\infty$ for all $n\ge 1$ and $\sum_{k=1}^{\infty} s_k^{-r} \mathbb{E}(|Y_k|^r)<\infty$, then it is clear that the conditions $(ii)$ and $(iii)$ hold.
\end{rem}

\begin{rem} \label{rem2.2a}
The proof of Theorem \ref{th1a} is based on the martingale version of the Skorohod embedding theorem (see Theorem A.1 in \cite{HH1980}).
For the strong invariance principle of  martingale difference sequences, Shao \cite{Shao1993} obtained that
if there exists a sequence $\{a_n, n\ge 1\}$ of non-decreasing positive numbers with $a_n\rightarrow \infty$ such that
\bestar
\sum_{k=1}^n \mathbb{E}(Y_k^2|\mathscr{F}_{k-1})-s_n^2=o(a_n)~~a.s.
\eestar
 and $\sum_{k=1}^{\infty} a_k^{-r/2} \mathbb{E}(|Y_k|^{r})<\infty$ for some $2\le r\le 4$,
then
\bestar
M(n)-W(s_n^2)=o\Big(\Big(a_n\Big(\log \frac{s_n^2}{a_n}+\log\log a_n\Big)\Big)^{1/2}\Big)~~a.s.
\eestar
This is exactly the same as (\ref{asinva2}) when $a_n=s_n^2$. By applying the  Skorohod embedding theorem, we can also obtain the
uniform weak convergence result (\ref{weakinva2}), which implies the functional central limit theorem (i.e., the weak invariance principle) for martingale difference sequences.
The corresponding result for i.i.d. random variables was provided in Theorem 2.1.2 of \cite{CR1981}.
\end{rem}

\begin{exam}  \label{th3a}
Let $\{Y_n, n\ge 1\}$ be  a stationary ergodic martingale difference sequence with $\mathbb{E}(Y_1^2)=1$, and write  $M_n=\sum_{k=1}^n Y_k$.
An application of Theorem \ref{th1a} gives that  there exists  a richer probability space with a standard Brownian motion $\{W(t), t\ge 0\}$ such that
\be
\frac{1}{\sqrt{n\log\log n}} |M_{n}-W(n)|\to 0~~a.s.,  \label{iden1}
\ee
and
\be
\frac{1}{\sqrt{n}}\sup\limits_{0\le t\le 1} \Big|M_{[nt]}-W([nt])\Big|\stackrel{P}{\longrightarrow} 0.   \label{iden2}
\ee
\end{exam}

\begin{proof}[Proofs of (\ref{iden1}) and (\ref{iden2})]
By the orthogonality of martingale differences, we have
\bestar
s_n^2:=\mbox{Var}(M_n)=\sum_{k=1}^n \mbox{Var}(Y_k)=\sum_{k=1}^n \mathbb{E}(Y_k^2)=n.
\eestar
Applying Birkhoff's ergodic theorem (see e.g. Theorem 10.6 in \cite{KA2002}) gives that
$
n^{-1}\sum_{k=1}^n Y_k^2 \rightarrow 1~a.s.
$
Fubini's theorem implies
 \bestar
\sum_{k=1}^{\infty} k^{-1/2} \mathbb{E}(|Y_k|I(|Y_k|> \sqrt{k}))&=&\sum_{k=1}^{\infty} k^{-1/2} \mathbb{E}(|Y_1|I(|Y_1|> \sqrt{k}))\\
&=&\mathbb{E}\Big(|Y_1|\sum_{k=1}^{\infty} k^{-1/2}I(Y_1^2>k)\Big)\le 2\mathbb{E}(Y_1^2)<\infty,
 \eestar
 and
 \bestar
 \sum_{k=1}^{\infty} k^{-2} \mathbb{E}(Y_k^4I(|Y_k|\le \sqrt{k}))&=&
 \mathbb{E}\Big(Y_1^4 \sum_{k=1}^{\infty} k^{-2}I(Y_1^2\le k)\Big)\\
 &=&
 \mathbb{E}\Big(Y_1^4 \sum_{k\ge Y_1^2} k^{-2}\Big)\le \mathbb{E}(Y_1^2)<\infty.
 \eestar
Now (\ref{iden1}) and (\ref{iden2}) follow from Theorem \ref{th1a}.
\end{proof}

In many applications, the object of interest $\{Z_n, n\ge 1\}$ is  not a martingale but a scaled martingale.
For example, $Z_n$ can be written as $Z_n=b_nM_n$, where  $\{M_n, n\ge 1\}$ is a zero-mean martingale and $\{b_n, n\ge 1\}$ is a regularly varying positive sequence.
Here $\{b_n, n\ge 1\}$ is said to vary  regularly (at $\infty$) with index $\alpha \in \mathbb{R}$ if
$b_{[nt]}/b_n \rightarrow t^{\alpha}$ as $n\rightarrow \infty$ for any $t>0$. If the conditions of Theorem \ref{th1a} are satisfied for the martingale difference sequence of $\{M_n, n\ge 1\}$,
then the strong invariance principle for $Z_n$ follows immediately from (\ref{asinva2}).
As for the weak invariance principle,  we can get it from (\ref{weakinva2}) for the process $\{Z_{[nt]}/b_{[nt]},~0\le t\le 1\}$.
By the following Theorem \ref{th3}, the weak invariance principle for  $\{Z_{[nt]}/b_n, ~0\le t\le 1\}$ can also be obtained.

\begin{thm} \label{th3}
Suppose that the conditions of Theorem \ref{th1a} are satisfied.
Let $\{s_n, n\ge 1\}$ and $\{b_n, n\ge 1\}$ be regularly varying positive sequences with indices $\alpha>0$ and $\beta\in \mathbb{R}$, respectively.
If $\alpha+\beta>0$, then there exists  a richer probability space with a standard Brownian motion $\{W(t), t\ge 0\}$ such that
\be
\frac{1}{s_n}\max\limits_{0\le t\le 1} \Big|\frac{b_{[nt]}M_{[nt]}}{b_n}-t^{\beta}W(s^2_{n}t^{2\alpha})\Big|\stackrel{P}{\longrightarrow} 0.
\label{weakinva}
\ee
\end{thm}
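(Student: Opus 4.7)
The plan is to deduce Theorem \ref{th3} from the uniform convergence (\ref{weakinva2}) provided by Theorem \ref{th1a}, combined with the standard theory of regularly varying sequences and the self-similarity of Brownian motion. Let $W$ be the Brownian motion furnished by Theorem \ref{th1a}, and for each $n$ set $\tilde W_n(u):=W(s_n^2 u)/s_n$, which is again a standard Brownian motion. Then $W(s_{[nt]}^2)/s_n=\tilde W_n(s_{[nt]}^2/s_n^2)$ and $t^\beta W(s_n^2 t^{2\alpha})/s_n=t^\beta \tilde W_n(t^{2\alpha})$, so the difference inside the maximum decomposes as
\begin{equation*}
\frac{b_{[nt]}}{b_n}\cdot\frac{M_{[nt]}-W(s_{[nt]}^2)}{s_n}+\frac{b_{[nt]}}{b_n}\bigl(\tilde W_n(s_{[nt]}^2/s_n^2)-\tilde W_n(t^{2\alpha})\bigr)+\Bigl(\frac{b_{[nt]}}{b_n}-t^\beta\Bigr)\tilde W_n(t^{2\alpha}),
\end{equation*}
and I would argue that each of the three pieces vanishes in probability uniformly in $t\in[0,1]$.

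The strategy is to split the maximum at a small $\delta>0$ and handle $[\delta,1]$ and $[0,\delta]$ separately. On $[\delta,1]$, the uniform convergence theorem for regularly varying sequences gives $b_{[nt]}/b_n\to t^\beta$ and $s_{[nt]}^2/s_n^2\to t^{2\alpha}$ uniformly, so each of the three pieces is $o_P(1)$ uniformly on $[\delta,1]$: the first via (\ref{weakinva2}) and the uniform boundedness of $b_{[nt]}/b_n$; the second via the uniform smallness of $s_{[nt]}^2/s_n^2-t^{2\alpha}$ combined with the uniform continuity of Brownian motion on bounded intervals (using $\tilde W_n\stackrel{d}{=}W$); and the third via the uniform smallness of $b_{[nt]}/b_n-t^\beta$ together with the fact that $\sup_{0\le u\le 2}|\tilde W_n(u)|=O_P(1)$. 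On $[0,\delta]$ the task reduces to showing that both $\sup_{0\le t\le\delta}|b_{[nt]} M_{[nt]}|/(b_n s_n)$ and $\sup_{0\le t\le\delta}t^\beta|\tilde W_n(t^{2\alpha})|$ tend to zero in probability, uniformly in $n$, as $\delta\downarrow 0$. For the latter, $\tilde W_n\stackrel{d}{=}W$ reduces matters to Brownian motion, and the LIL bound $|W(u)|=O(\sqrt{u\log\log(1/u)})$ as $u\downarrow 0$ together with the hypothesis $\alpha+\beta>0$ gives $t^\beta|W(t^{2\alpha})|=O(t^{\alpha+\beta}\sqrt{\log\log(1/t)})\to 0$ almost surely. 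For the former, I would split $M_{[nt]}=W(s_{[nt]}^2)+(M_{[nt]}-W(s_{[nt]}^2))$: the $W$-term is handled by Potter's inequality $b_{[nt]} s_{[nt]}/(b_n s_n)\le C([nt]/n)^{\alpha+\beta-2\eta}$ with $\eta$ small enough that the exponent is positive, combined with the same near-origin Brownian estimate, while the error term is absorbed via (\ref{weakinva2}) and the Potter bound $b_{[nt]}/b_n\le C([nt]/n)^{\beta-\eta}$.

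The main obstacle will be the analysis on $[0,\delta]$ when $\beta<0$, where the weight $b_{[nt]}/b_n$ itself blows up as $t\downarrow 0$, so that neither a coarse uniform estimate on $M_{[nt]}$ nor a direct application of (\ref{weakinva2}) is sharp enough. The hypothesis $\alpha+\beta>0$ is precisely what is required: it ensures that the combined weight $b_{[nt]} s_{[nt]}/(b_n s_n)$ decays fast enough at the origin to dominate both the Brownian fluctuations of $\tilde W_n$ at small time-scales and, through the martingale-approximation rate from Theorem \ref{th1a}, the error $M_{[nt]}-W(s_{[nt]}^2)$. One then combines the two regions by first choosing $\delta$ small, so that the $[0,\delta]$ contribution is uniformly small in $n$, and then letting $n\to\infty$ to dispose of the $[\delta,1]$ contribution, which yields (\ref{weakinva}).
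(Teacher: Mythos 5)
Your decomposition and the split of $[0,1]$ at a small $\delta$ match the paper's overall architecture (the paper's terms $I_{1,n}$, $I_{2,n}$, $I_{4,n}$ correspond to your three pieces on $[\delta,1]$ and to $t^{\beta}\tilde W_n(t^{2\alpha})$ on $[0,\delta]$), and your treatment of those pieces is sound. The gap is in the remaining term, $\sup_{0\le t\le\delta}|b_{[nt]}M_{[nt]}|/(b_ns_n)$, in the case $\beta\le 0$ that you yourself flag as the main obstacle. You propose to write $M_{[nt]}=W(s^2_{[nt]})+(M_{[nt]}-W(s^2_{[nt]}))$ and to absorb the error via (\ref{weakinva2}) together with Potter's bound $b_{[nt]}/b_n\le C([nt]/n)^{\beta-\eta}$. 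But (\ref{weakinva2}) controls $\max_{k\le n}|M_k-W(s_k^2)|$ only at the global scale $s_n$, and for $\beta<0$ the weight $b_{[nt]}/b_n$ is unbounded as $[nt]/n\to 0$, so the product of an unbounded weight with an $o_P(s_n)$ quantity is not $o_P(s_n)$. The almost sure rate (\ref{asinva2}) does not rescue this either: it gives $|M_k-W(s_k^2)|\le\epsilon_k s_k\sqrt{\log\log s_k}$ with $\epsilon_k\to 0$ at no specified rate, and after multiplying by $b_k/(b_ns_n)$ and using Potter for $b_ks_k/(b_ns_n)$ one is left, on the range $k\asymp\delta n$, with a bound of order $\epsilon_k\,\delta^{\alpha+\beta-2\eta}\sqrt{\log\log s_n}$, in which the factor $\sqrt{\log\log s_n}$ is not absorbed. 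So the assertion that $\alpha+\beta>0$ suffices ``through the martingale-approximation rate from Theorem \ref{th1a}'' is precisely where the argument does not close.

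The paper's resolution is to bound $|M_k|$ near the origin \emph{directly}, without ever comparing it to $W(s_k^2)$ there: it reaches back into the proof of Theorem \ref{th1a} and uses the Skorohod embedding $\tilde M_k=W(T_k)$ together with $T_k/s_k^2\to 1$ a.s. On the event $\sup_{m\le k\le n}|T_k/s_k^2-1|<1/2$ one gets $|M_{[nt]}|\le\max_{1/2\le u\le 2}|W(s^2_{[nt]}u)|$, and then Potter's bounds, Brownian scaling and the law of the iterated logarithm at the origin give $\sup_{m/n\le t\le b}b_{[nt]}|M_{[nt]}|/(b_ns_n)\le 2\max_{0\le u\le 8b^{2\alpha-2\delta}}(u/8)^{r}|W(u)|$ with $r>-1/2$, which is small for small $b$; the residual range $t\le m/n$ is handled trivially by $b_ns_n\to\infty$, which is where $\alpha+\beta>0$ enters a second time. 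If you wish to avoid the embedding, a Doob maximal inequality applied over dyadic blocks $2^{i}\le k<2^{i+1}$ with $2^{i}\le\delta n$, combined with Potter's bound for $b_{2^{i}}s_{2^{i}}/(b_ns_n)$, would also work; but some device of this kind that controls $|M_k|$ at the local scale $s_k$ rather than $s_n$ is indispensable, and it is the one ingredient missing from your plan.
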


\section{Proofs of Theorems \ref{th1a} and \ref{th3}}  \label{sect3}

\begin{proof}[Proof of Theorem \ref{th1a}]  Let $\tilde{Y}_k=Y_kI(|Y_k|\le cs_k)$ for $k\ge 1$ and let $\tilde{M}_n=\sum_{k=1}^n (\tilde{Y}_k-\mathbb{E}(\tilde{Y}_k|\mathscr{F}_{k-1}))$ for $n\ge 1$.
 Since $\mathbb{E}(\tilde{Y}_k|\mathscr{F}_{k-1})=\mathbb{E}(Y_kI(|Y_k|> cs_k)|\mathscr{F}_{k-1})$ by noting that $\mathbb{E}(Y_k|\mathscr{F}_{k-1})=0$,
it follows from (ii) and Fubini's theorem that
\bestar
\mathbb{E}\Big(\sum_{k=1}^{\infty}  s_k^{-1}|\mathbb{E}(\tilde{Y}_k|\mathscr{F}_{k-1})|\Big)&\le& \mathbb{E}\Big(\sum_{k=1}^{\infty}  s_k^{-1}\mathbb{E}(|Y_k|I(|Y_k|> cs_k)|\mathscr{F}_{k-1})\Big)\\
&=& \sum_{k=1}^{\infty} \mathbb{E}( s_k^{-1}\mathbb{E}(|Y_k|I(|Y_k|> cs_k)|\mathscr{F}_{k-1}))\\
&=&
\sum_{k=1}^{\infty} s_k^{-1}\mathbb{E}(|Y_k|I(|Y_k|> cs_k))<\infty.
\eestar
This implies that
$
\sum_{k=1}^{\infty}  s_k^{-1}|\mathbb{E}(\tilde{Y}_k|\mathscr{F}_{k-1})|<\infty~a.s.
$
and hence by Kronecker's lemma,
\be
s_n^{-1}\sum_{k=1}^n |\mathbb{E}(\tilde{Y}_k|\mathscr{F}_{k-1})|\rightarrow 0~~a.s.  \label{snsum}
\ee
By (ii),
\bestar
\sum_{k=1}^{\infty}\mathbb{P}(\tilde{Y}_k\ne Y_k)=\sum_{k=1}^{\infty}  \mathbb{P}(|Y_k|>cs_k)\le \sum_{k=1}^{\infty} c^{-1}s_k^{-1} \mathbb{E}(|Y_k|I(|Y_k|>cs_k)) <\infty.
\eestar
Applying the Borel-Cantelli lemma gives that $\mathbb{P}(\tilde{Y}_k\ne Y_k~i.o.)=0$. Hence
\be
\frac{1}{s_n^2}\sum_{k=1}^n(Y_k^2-\tilde{Y}_k^2)\rightarrow 0~~a.s. ~~~~~~ \mbox{and}~~~~~~\frac{1}{s_n}\sup_{0\le t\le 1} |{M}_{[nt]}-\tilde{M}_{[nt]}|\rightarrow 0~~a.s. \label{eqinv1}
\ee

It follows from (iii) that
\bestar
\sum_{k=1}^{\infty}\mathbb{E}\Big( \frac{\tilde{Y}_k^2-\mathbb{E}(\tilde{Y}_k^2|\mathscr{F}_{k-1})}{s_k^2}\Big)^2\le \sum_{k=1}^{\infty} \frac{\mathbb{E}(\tilde{Y}_k^4)}{s_k^4}<\infty.
\eestar
Hence $\sum_{k=1}^{\infty} s_k^{-4}\mathbb{E}( (\tilde{Y}_k^2-\mathbb{E}(\tilde{Y}_k^2|\mathscr{F}_{k-1}))^2|\mathscr{F}_{k-1})<\infty~a.s.$
Applying Theorem 2.15 in \cite{HH1980} gives that
$
\sum_{k=1}^{\infty} s_k^{-2}({\tilde{Y}_k^2-\mathbb{E}(\tilde{Y}_k^2|\mathscr{F}_{k-1})})
$ converges a.s.
and hence by Kronecker's lemma,
\bestar
\frac{\sum_{k=1}^n (\tilde{Y}_k^2-\mathbb{E}(\tilde{Y}_k^2|\mathscr{F}_{k-1}))}{s_n^2}\rightarrow 0~~a.s.
\eestar
Combining this  with (\ref{eqinv1}) and the condition (i) shows
\be
\frac{\sum_{k=1}^n \mathbb{E}(\tilde{Y}_k^2|\mathscr{F}_{k-1})}{s_n^2}\rightarrow 1~~a.s.  \label{eqinv2}
\ee

By applying the martingale version of the Skorohod embedding theorem to $\tilde{M}_n$ (see, for instance,  Theorem A.1 in \cite{HH1980}),
there exists a richer probability space supporting a standard  Brownian motion $\{W(t), t\ge 0\}$ and  nonnegative random variables
 $\{\tau_n, n\ge 1\}$ with partial sums $T_n=\sum_{k=1}^n \tau_k$ such that for $n\ge 1$,  $\tau_n$ is $\mathscr{F}_n$-measurable, $\tilde{M}_n=W(T_n)$ and
\bestar
&&\mathbb{E}\big( \tau_n \big| \mathscr{F}_{n-1} \big) = \mathbb{E}\big( (\tilde{Y}_n')^2 \big| \mathscr{F}_{n-1} \big), \nonumber\\
&&\mathbb{E}\big( \tau_n^2 \big| \mathscr{F}_{n-1} \big) \le 4\mathbb{E}\big( (\tilde{Y}_n')^4 \big| \mathscr{F}_{n-1} \big),
\eestar
where $ \tilde{Y}_n'=\tilde{Y}_n-\mathbb{E}(\tilde{Y}_n|\mathscr{F}_{n-1})$ for $n\ge 1$.
Write $\tilde{\tau}_n=\tau_n-\mathbb{E}\big( \tau_n \big| \mathscr{F}_{n-1} \big)$ for $n\ge 1$, then $\{\tilde{\tau}_n, \mathscr{F}_n, n\ge 1\}$ is a martingale difference sequence  and
\bestar
\mathbb{E}\big( \tilde{\tau}_n^2 \big| \mathscr{F}_{n-1} \big) &=& \mathbb{E}\big(\tau_n^2 \big| \mathscr{F}_{n-1} \big) - \big(\mathbb{E}\big( \tau_n \big| \mathscr{F}_{n-1} \big)\big)^2
 \le \mathbb{E}\big( \tau_n^2 \big| \mathscr{F}_{n-1} \big)\\
 &\le&  4\mathbb{E}\big( (\tilde{Y}_n')^4 \big| \mathscr{F}_{n-1} \big)\le   64\mathbb{E}\big( \tilde{Y}_n^4 \big| \mathscr{F}_{n-1} \big).
\eestar
Hence  by using (iii), we have
\bestar
\sum_{k=1}^{\infty}\frac{\mathbb{E}\big( \tilde{\tau}_k^2 \big| \mathscr{F}_{k-1} \big)}{{s}_k^4} \le 64\sum_{k=1}^{\infty}\frac{\mathbb{E}\big( \tilde{Y}_k^4 \big| \mathscr{F}_{k-1} \big)}{s_k^4}<\infty~~a.s.
\eestar
Applying Theorem 2.15 in \cite{HH1980} gives that $\sum_{k=1}^{\infty} s_k^{-2}\tilde{\tau}_k$ converges a.s.  By Kronecker's lemma, we have
\bestar
\frac{T_n-\sum_{k=1}^n (\mathbb{E}\tilde{Y}_k^2|\mathscr{F}_{k-1})+\sum_{k=1}^n(\mathbb{E}(\tilde{Y}_k|\mathscr{F}_{k-1}))^2}{s_n^{2}}=\frac{\sum_{k=1}^n \tilde{\tau}_k}{s_n^2}\rightarrow 0~~a.s.
\eestar
 Hence it follows from (\ref{snsum}) and (\ref{eqinv2}) that $s_n^{-2} T_n\rightarrow 1~a.s.$
Noting that (see e.g. Lemma 14.7 in \cite{KA2002})
\be
\lim_{r\downarrow 1}\limsup_{t\rightarrow\infty} \sup_{t/r \le u\le rt}\frac{|W(u)-W(t)|}{\sqrt{t\log\log t}}=0~~a.s.,  \label{rate}
\ee
we get
\be
\tilde{M}_n-W(s_n^2)=W(T_n)-W(s_n^2)=o(\sqrt{s_n^2\log\log s_n})~~a.s. \label{hu1}
\ee
Now applying (\ref{eqinv1}) gives (\ref{asinva2}).

Write $\delta_n=\sup_{k\le n}|T_k-s_k^2|$, then by $s_n^{-2}T_n\rightarrow 1~a.s.$ we have  $s_n^{-2}\delta_n\rightarrow 0~a.s.$
Let
\be
\omega(f, t, h)=\sup_{0\le r, s\le t, |r-s|\le h}|f(r)-f(s)|,~~t, h>0  \label{modulus}
\ee
denote the modulus of continuity of the function $f: [0,\infty)\rightarrow \mathbb{R}$,
then  $\omega(W, 1+h, h)\rightarrow 0~a.s.$ as $h\rightarrow 0$ because of the almost sure continuity of Brownian paths.
For any $\varepsilon, h>0$,  applying the scaling property of $W$ gives
\bestar
&&\mathbb{P}\Big(s_n^{-1}\sup_{0\le t\le 1} \big|W(T_{[nt]})-W(s_{[nt]}^2) \big|>\varepsilon \Big)\\
&&~~~~~~\le  \mathbb{P}(\delta_n>hs_n^2)+\mathbb{P}(\omega(W, (1+h)s_n^2, hs_n^2)>\varepsilon s_n)\\
&&~~~~~~= \mathbb{P}(s_n^{-2}\delta_n>h)+\mathbb{P}(\omega(W, 1+h, h)>\varepsilon).
\eestar
Hence the right-hand side tends to $0$ as $n\rightarrow \infty$ and then $h\rightarrow 0$, and  (\ref{weakinva2}) follows by (\ref{eqinv1}).
\end{proof}

\begin{proof}[Proof of Theorem \ref{th3}]
For any fixed $0<b<1$,
Theorem 1.5.2 in \cite{Bingham1987} implies that as $n\rightarrow \infty$,
\bestar
\max_{b\le  t\le 1}  \Big|\frac{b_{[nt]}}{b_n}-t^{\beta}\Big|\rightarrow 0,~~~~\delta_n(b):=\max_{b\le  t\le 1}  \Big|\frac{s_{[nt]}^2}{s_n^2}-t^{2\alpha}\Big|\rightarrow 0,
\eestar
and hence
\be
I_{1,n}(b)&:=&\frac{1}{s_n}\max\limits_{b\le t\le 1} \Big|\frac{b_{[nt]}M_{[nt]}}{b_n}-t^{\beta}W(s^2_{[nt]})\Big|\nonumber\\
&\le &\frac{1}{s_n}\max\limits_{b\le t\le 1} \Big|\frac{b_{[nt]}(M_{[nt]}-W(s^2_{[nt]}))}{b_n}\Big|+\frac{1}{s_n}\max\limits_{b\le t\le 1} \Big|\Big(\frac{b_{[nt]}}{b_n}-t^{\beta}\Big)W(s^2_{[nt]})\Big|\nonumber\\
&\le& \frac{1}{s_n}\max_{0\le t\le 1}\Big|M_{[nt]}-W(s^2_{[nt]})\Big|\max\limits_{b< t\le 1}  \Big|\frac{b_{[nt]}}{b_n}\Big|
+\frac{1}{s_n}\max\limits_{0\le t\le s_n^2}|W(t)| \max_{b\le  t\le 1}  \Big|\frac{b_{[nt]}}{b_n}-t^{\beta}\Big| \nonumber\\
&\stackrel{P}{\rightarrow}& 0,  ~~~~n\rightarrow \infty,  \label{I1n}
\ee
where in the last step we have used Theorem \ref{th1a} and the scaling property of $\{W(t), t \ge 0\}$:
\bestar
s_n^{-1}\max\limits_{0\le t\le s_n^2}|W(t)| \stackrel{d}{=} \max_{0\le t\le 1} |W(t)|.
\eestar

By the scaling property and the almost sure continuity of sample paths of $\{W(t), t\ge 0\}$, we have that for fixed $0<b<1$,
\be
I_{2,n}(b)&:=&\frac{1}{s_n}\max\limits_{b\le t\le 1} \Big|t^{\beta}(W(s^2_{[nt]})-W(s^2_{n}t^{2\alpha}))\Big|\nonumber\\
&\le& \frac{\max\{b^{\beta}, 1\}}{s_n}\max\limits_{b\le t\le 1} \Big|W(s^2_{[nt]})-W(s^2_{n}t^{2\alpha})\Big|\nonumber\\
&\le& \frac{\max\{b^{\beta}, 1\}}{s_n} \omega(W, s_n^2, s_n^2\delta_n(b))\nonumber\\
&\stackrel{d}{=}& \max\{b^{\beta}, 1\} \omega(W, 1, \delta_n(b))\rightarrow 0~~a.s.,~~~~n\rightarrow \infty,  \label{I2n}
\ee
where $\omega(\cdot, \cdot, \cdot)$ is the modulus of continuity and defined as in (\ref{modulus}).

For any $0<\delta<\min\{\alpha, |\beta|, (\alpha+\beta)/2\}$,
by applying Theorem 1.5.6 in \cite{Bingham1987}, there exists $n_0$ such that
\bestar
b_{[nt]}/b_n\le 2t^{\beta-\delta}~~\mbox{and}  ~~(1/2)t^{\alpha+\delta}\le s_{[nt]}/s_n\le 2t^{\alpha-\delta}
\eestar
 hold
for  $n \ge 1$ and $0<t\le 1$ satisfying $nt\ge n_0$.
Define
\bestar
\delta_{m,n}=\sup_{m\le k\le n}|T_{k}/s_k^2-1|,
\eestar
where $\{T_k\}$ is defined as in the proof of Theorem \ref{th1a}. It follows from $T_n/s_n^2\rightarrow 1~a.s.$ that
\bestar
\limsup_{m\rightarrow\infty}\limsup_{n\rightarrow\infty}\mathbb{P}(\delta_{m,n}\ge 1/2)=0.
\eestar
If $\delta_{m,n}<1/2$ and $n_0 \le m\le bn$, then
\bestar
I_{3,m,n}(b)&:=&\frac{1}{s_n}\max\limits_{m/n \le t\le b} \Big|\frac{b_{[nt]}M_{[nt]}}{b_n}\Big|=\frac{1}{s_n}\max\limits_{m/n\le t\le b} \Big|\frac{b_{[nt]}W(T_{[nt]})}{b_n}\Big|\\
&\le&  \frac{2}{s_n}\max\limits_{m/n\le t\le b} t^{\beta-\delta} \max_{1/2\le u\le 2} |W(s_{[nt]}^2u)|\\
&\le& \frac{2}{s_n}\max\limits_{0\le t\le b} t^{\beta-\delta} \max_{t^{2\alpha+2\delta}/8\le u\le 8t^{2\alpha-2\delta}} |W(s_n^2u)|\\
&\stackrel{d}{=}& 2\max\limits_{0\le t\le b} t^{\beta-\delta} \max_{t^{2\alpha+2\delta}/8\le u\le 8t^{2\alpha-2\delta}} |W(u)|\\
&\le& 2\max_{0\le u\le 8b^{2\alpha-2\delta}} (u/8)^{r} |W(u)|,
\eestar
where $r=0$ when $\beta>0$ and $r=(\beta-\delta)/(2\alpha -2\delta)>-1/2$ when $\beta\le 0$.
Hence by applying the law of the iterated logarithm (see e.g. Theorem 13.18 in \cite{KA2002}):
\bestar
\limsup_{t\rightarrow 0} \frac{|W(t)|}{\sqrt{2t\log\log (1/t)}}=1~~~~a.s.,
\eestar
we get that
\be
&&\lim_{b\rightarrow 0}\limsup_{m\rightarrow\infty}\limsup_{n\rightarrow\infty}\mathbb{P}(I_{3,m,n}(b)>\varepsilon)
 \nonumber\\
&&~~~~~~~\le\limsup_{m\rightarrow\infty}\limsup_{n\rightarrow\infty}\mathbb{P}(\delta_{m,n}\ge 1/2)\nonumber\\
&&~~~~~~~~~~~~+\lim_{b\rightarrow 0}\limsup_{m\rightarrow\infty}\limsup_{n\rightarrow\infty}\mathbb{P}(I_{3,m,n}(b)>\varepsilon, \delta_{m,n}<1/2)
=0. \label{I3n}
\ee

Similarly,
\be
\lim_{b\rightarrow 0}\limsup_{n\rightarrow\infty}\mathbb{P}(I_{4,n}(b)>\varepsilon)=0,  \label{I4n}
\ee where
\bestar
I_{4,n}(b):=\frac{1}{s_n}\max\limits_{0\le t\le b} \Big|t^{\beta}W(s^2_{n}t^{2\alpha})\Big|\stackrel{d}{=}\max\limits_{0\le t\le b} |t^{\beta}W(t^{2\alpha})|.
\eestar
Observing that
\bestar
I_{5,m,n}:=\frac{1}{s_n}\max\limits_{0\le t\le m/n} \Big|\frac{b_{[nt]}M_{[nt]}}{b_n}\Big|\le \frac{1}{b_ns_n} \max_{1\le k\le m} |b_kM_k|,
\eestar
and  $b_ns_n\rightarrow \infty$ as $n\rightarrow \infty$ since $b_ns_n$ is a regularly varying sequence with index $\alpha+\beta >0$, we have
\be
\limsup_{m\rightarrow\infty}\limsup_{n\rightarrow\infty}\mathbb{P}(I_{5,m,n}>\varepsilon)=0. \label{I5n}
\ee
Now (\ref{weakinva}) follows from (\ref{I1n})$-$(\ref{I5n}).
\end{proof}

\section{Proofs of main results} \label{sect4}

Suppose that $\{t_n, n\ge 1\}$ is a sequence of positive numbers with $t_n\uparrow \infty$.
Let
\bestar
Z_n=X_n {I}(|X_n| \le t_n),~~~~~~ n\ge 1,
\eestar
  and let $\{\hat{Z}_n,\check{Z}_n,\hat{S}^*(n),\check{S}^*(n), n\ge 1\}$ be the corresponding variables by replacing $\{X_n, ~n\ge 1\}$ in the algorithms of $\{\hat{X}_n,\check{X}_n, \hat{S}(n), \check{S}(n), n\ge 1\}$ by $\{Z_n, n\ge 1\}$.
  Let $\mathscr{F}_0=\{\emptyset, \Omega\},~\mathscr{F}_1=\sigma(X_1)$ and
  $$ \mathscr{F}_n = \sigma(\varepsilon_{2},\cdots,\varepsilon_{n},U_2,\cdots,U_n,X_1,\cdots,X_n),~~~~n\ge 2. $$
For any $n \ge 1$, we have
\be
\mathbb{E}\big( \hat{Z}_{n+1} \big| \mathscr{F}_n \big) = p \frac{\hat{Z}_1 + \cdots + \hat{Z}_n}{n}+(1-p)\mathbb{E}(Z_{n+1})= \frac{p}{n}\hat{S}^*(n) + (1-p)\mathbb{E}(Z_{n+1}).
\label{YnFn0}
\ee
Define
$\hat{\gamma}_n = \frac{n+p}{n}$ for $n\ge 1$,~ $\hat{a}_1=1$  and
\be \label{defan}
\hat{a}_n = \prod_{k=1}^{n-1}\hat{\gamma}_k^{-1} = \frac{\Gamma(n)\Gamma(1+p)}{\Gamma(n+p)},~~~~n\ge 2.
\ee
Then
\be
\mathbb{E}\big( \hat{S}^*(n+1) \big| \mathscr{F}_n \big) = \hat{\gamma}_n\hat{S}^*(n) + (1-p)\mathbb{E}(Z_{n+1}) \label{YnFn1}
\ee
 and hence  $\{\hat{M}_n, \mathscr{F}_n, n\ge 1\}$
is a  martingale, where $\hat{M}_n=\hat{a}_n (\hat{S}^*(n)-\mathbb{E}(\hat{S}^*(n)))$.
Let $\{\hat{Y}_n, n\ge 1\}$ be the martingale differences, that is, $\hat{Y}_1=\hat{M}_1$ and $\hat{Y}_{n} = \hat{M}_{n}-\hat{M}_{n-1}$ for all $n\ge 2$.
It follows from \eqref{YnFn0} and \eqref{YnFn1} that for $n\ge 2$,
\be
\nonumber
\hat{Y}_{n} &=& \hat{a}_n \Big(\hat{S}^*(n)-\mathbb{E}(\hat{S}^*(n))- \hat{\gamma}_{n-1} (\hat{S}^*(n-1)-\mathbb{E}(\hat{S}^*(n-1))) \Big)
\\
&=& \hat{a}_n \Big(\hat{S}^*(n)-\hat{S}^*(n-1)-\frac{p}{n-1}\hat{S}^*(n-1)  - \big( \mathbb{E}(\hat{S}^*(n))-\hat{\gamma}_{n-1}\mathbb{E}(\hat{S}^*(n-1)) \big)\Big) \nonumber \\
\nonumber
&=& \hat{a}_n \Big( \hat{Z}_{n}-\frac{p}{n-1}\hat{S}^*(n-1)-(1-p)\mathbb{E}(Z_{n}) \Big)\\
\label{epsilonn+1}
&=& \hat{a}_n \Big( \hat{Z}_{n} - \mathbb{E}\big( \hat{Z}_{n} \big| \mathscr{F}_{n-1} \big)\Big).
\ee
For $n=1$, it is clear that we still have $\hat{Y}_{1}=\hat{a}_1 \big( \hat{Z}_{1} - \mathbb{E}\big( \hat{Z}_{1} \big| \mathscr{F}_{0} \big)\big)$.

Similarly, we can define $\check{\gamma}_n = \frac{n-p}{n}$  for $n\ge 1$,~ $\check{a}_1=1$  and
\be \label{checkan}
\check{a}_n = \prod_{k=1}^{n-1}\check{\gamma}_k^{-1} =\frac{\Gamma(n)\Gamma(1-p)}{\Gamma(n-p)},~~~~n\ge 2.
\ee
Let
$
\check{Y}_{n}= \check{a}_n \big( \check{Z}_{n} - \mathbb{E}\big( \check{Z}_{n} \big| \mathscr{F}_{n-1} \big)\big)$ for $n\ge 1$.
Then
$
\check{M}_n:=\sum_{k=1}^n \check{Y}_k=\check{a}_n (\check{S}^*(n)-\mathbb{E}(\check{S}^*(n)))
$
and
$\{\check{M}_n, \mathscr{F}_n, n\ge 1\}$
is a  martingale.

\begin{lemma} \label{lemmamoment} For any $m, n\in \mathbb{N}$, we have
\bestar
\mathbb{E}(\hat{Z}_n^{2m})=\mathbb{E}(\check{Z}_n^{2m}) \le \mathbb{E}(Z_n^{2m}).
\eestar
\end{lemma}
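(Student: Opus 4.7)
The plan is to prove both the equality $\mathbb{E}(\hat{Z}_n^{2m})=\mathbb{E}(\check{Z}_n^{2m})$ and the inequality $\mathbb{E}(\hat{Z}_n^{2m})\le \mathbb{E}(Z_n^{2m})$ by induction on $n$, driven by a one-step recursion obtained by conditioning on $\varepsilon_n$ and $U_n$ in the defining algorithms \eqref{defhatX} and \eqref{defcheckX}. For $n=1$ both identities are trivial since $\hat{Z}_1=\check{Z}_1=Z_1$.

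For the inductive step, I would condition on $(\varepsilon_n,U_n)$ in the recursions $\hat{Z}_n=Z_nI(\varepsilon_n=0)+\hat{Z}_{U_n}I(\varepsilon_n=1)$ and $\check{Z}_n=Z_nI(\varepsilon_n=0)-\check{Z}_{U_n}I(\varepsilon_n=1)$. The crucial point is that $2m$ is even, so $(-\check{Z}_{U_n})^{2m}=\check{Z}_{U_n}^{2m}$, and one obtains the parallel recursions
\begin{equation*}
\mathbb{E}(\hat{Z}_n^{2m})=(1-p)\mathbb{E}(Z_n^{2m})+\frac{p}{n-1}\sum_{k=1}^{n-1}\mathbb{E}(\hat{Z}_k^{2m}),
\end{equation*}
\begin{equation*}
\mathbb{E}(\check{Z}_n^{2m})=(1-p)\mathbb{E}(Z_n^{2m})+\frac{p}{n-1}\sum_{k=1}^{n-1}\mathbb{E}(\check{Z}_k^{2m}),
\end{equation*}
for $n\ge 2$. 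Equality $\mathbb{E}(\hat{Z}_n^{2m})=\mathbb{E}(\check{Z}_n^{2m})$ follows immediately by induction.

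For the inequality, I would use the hypothesis that $\{t_n\}$ is non-decreasing, which yields $\mathbb{E}(Z_k^{2m})=\mathbb{E}(X_1^{2m}I(|X_1|\le t_k))\le\mathbb{E}(Z_n^{2m})$ for every $k\le n$. Inserting the inductive bound $\mathbb{E}(\hat{Z}_k^{2m})\le\mathbb{E}(Z_k^{2m})\le\mathbb{E}(Z_n^{2m})$ for $k\le n-1$ into the recursion for $\mathbb{E}(\hat{Z}_n^{2m})$ gives the desired bound $\mathbb{E}(\hat{Z}_n^{2m})\le(1-p)\mathbb{E}(Z_n^{2m})+p\mathbb{E}(Z_n^{2m})=\mathbb{E}(Z_n^{2m})$. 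There is no real obstacle here; the only subtle point is recognizing that the sign flip in the negative reinforcement is invisible to even-order moments, so the two processes have the same even-moment recursion and can be treated uniformly.
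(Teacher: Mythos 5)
Your proposal is correct and follows essentially the same route as the paper: the one-step recursion $\mathbb{E}(\hat{Z}_n^{2m})=(1-p)\mathbb{E}(Z_n^{2m})+\frac{p}{n-1}\sum_{k=1}^{n-1}\mathbb{E}(\hat{Z}_k^{2m})$ obtained by conditioning, the observation that even powers erase the sign flip (giving the equality), and induction combined with the monotonicity $\mathbb{E}(Z_k^{2m})\le\mathbb{E}(Z_n^{2m})$ coming from $t_n\uparrow\infty$. No gaps.
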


\begin{proof} It follows from the constructions of $\{\hat{Z}_n, n\ge 1\}$ and $\{\check{Z}_n, n\ge 1\}$ that $\mathbb{E}(\hat{Z}_n^{2m})=\mathbb{E}(\check{Z}_n^{2m})$.
We will prove  $\mathbb{E}(\hat{Z}_n^{2m}) \le \mathbb{E}(Z_n^{2m})$  by induction on $n$.
When $n=1$, it is clear that $\mathbb{E}(\hat{Z}_1^{2m}) = \mathbb{E}(Z_1^{2m})$. 
To prove the result for $n=k$, observe that
$$ \mathbb{E}\big( \hat{Z}_{k}^{2m} \big| \mathscr{F}_{k-1} \big) = (1-p) \mathbb{E}({Z}_{k}^{2m}) + p\frac{\sum_{j=1}^{k-1} \hat{Z}_j^{2m}}{k-1}.$$
Applying the
result for  $n\le k-1$ gives that
\bestar
 \mathbb{E}(\hat{Z}_{k}^{2m}) &=& (1-p) \mathbb{E}({Z}_{k}^{2m}) + p \frac{\sum_{j=1}^{k-1} \mathbb{E}(\hat{Z}_j^{2m})}{k-1} \\
&\le& (1-p) \mathbb{E}(Z_{k}^{2m}) + p \frac{\sum_{j=1}^{k-1} \mathbb{E}(Z_j^{2m})}{k-1} \le \mathbb{E}(Z_{k}^{2m}),
\eestar
and the proof is complete.
\end{proof}

\begin{lemma} \label{lemmalimita}
Suppose that $\{a_n, n\ge 1\}$ and $\{b_n, n\ge 1\}$ are  sequences of real numbers such that
\bestar
a_{n+1}=\frac{n+x}{n}a_n+b_{n+1},~~~~ n\ge 1.
\eestar
If $b_n\rightarrow b\in \mathbb{R}$, then we have
\be
a_n \sim \left\{
\begin{array}{ll}
bn/({1-x}), & x<1,\\
bn\log n, & x=1,\\
O(n^x), & x>1.
\end{array}\right.  \label{ansim}
\ee
Moreover, if $b_n=o(n^{-r})$ for some $r>0$,
then
\be
a_n=\left\{
\begin{array}{ll}
o(n^{1-r}), & x<1-r,\\
o(n^{x}\log n), & x=1-r,\\
 O(n^x), & x>1-r.
\end{array}
\right. \label{sumbgamma2}
\ee
\end{lemma}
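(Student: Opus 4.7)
The plan is to solve the linear recursion explicitly by an integrating-factor (summation-by-parts) trick, and then read off the asymptotics of $a_n$ from those of the resulting sum. Set
\[
c_n = \prod_{k=1}^{n-1}\frac{k+x}{k} = \frac{\Gamma(n+x)}{\Gamma(1+x)\,\Gamma(n)},\qquad n\ge 1,
\]
so that $c_{n+1}/c_n = (n+x)/n$. Dividing the recursion by $c_{n+1}$ gives the telescoping identity $a_{n+1}/c_{n+1} - a_n/c_n = b_{n+1}/c_{n+1}$, hence
\[
a_n = c_n\Bigl(a_1 + \sum_{k=2}^{n} \frac{b_k}{c_k}\Bigr).
\]
By Stirling's formula (or the standard ratio asymptotic for the Gamma function) we have $c_n \sim n^x/\Gamma(1+x)$, and consequently $1/c_k \sim \Gamma(1+x)\,k^{-x}$.

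For the first assertion \eqref{ansim}, plug the hypothesis $b_n\to b$ into the representation above. The summand satisfies $b_k/c_k \sim b\,\Gamma(1+x)\,k^{-x}$, so the asymptotics of $a_n$ reduce to the well-known behaviour of $\sum_{k=2}^{n} k^{-x}$: this sum is $\sim n^{1-x}/(1-x)$ when $x<1$, is $\sim \log n$ when $x=1$, and converges when $x>1$. Multiplying by $c_n\sim n^x/\Gamma(1+x)$ yields $a_n\sim bn/(1-x)$, $a_n\sim bn\log n$, and $a_n = O(n^x)$, respectively; the Gamma factors cancel cleanly in the first two cases.

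For the refinement \eqref{sumbgamma2}, the same representation is used but now $b_k/c_k = o(k^{-r-x})$. The argument splits on the comparison between $r+x$ and $1$: if $x>1-r$ the series $\sum k^{-r-x}$ is summable, so $\sum_{k=2}^{n} b_k/c_k$ stays bounded and $a_n = O(c_n) = O(n^x)$; if $x=1-r$ an $o(1/k)$ bound on the summand gives $\sum_{k=2}^{n} b_k/c_k = o(\log n)$, hence $a_n = o(n^x\log n)$; if $x<1-r$ then $\sum k^{-r-x}\sim n^{1-r-x}/(1-r-x)$ and, since each term is $o(k^{-r-x})$, the partial sum is $o(n^{1-r-x})$, yielding $a_n = o(n^x\cdot n^{1-r-x}) = o(n^{1-r})$.

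There is no real obstacle here: the entire argument is a one-line telescoping after choosing the right integrating factor, and the rest is routine comparison with $\sum k^{-\alpha}$. The only point that deserves care is the $o(\cdot)$ bookkeeping in the boundary cases $x=1$ and $x=1-r$, where one must verify that $\epsilon$--style tail estimates on the summand propagate correctly through the sum and produce the stated logarithmic factor (or its absence).
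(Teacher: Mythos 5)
Your argument follows essentially the same route as the paper's: solve the recursion explicitly with the integrating factor $\prod_k \frac{k+x}{k}$, then read off the asymptotics of $a_n$ from the resulting sum by comparison with $\sum_k k^{-\alpha}$ (the paper packages the first assertion via Toeplitz's lemma applied to $a_n=a_{n_0}\gamma_{n_0,n}(x)+\sum_{j>n_0} b_j\gamma_{j,n}(x)$, but that is the same computation).

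There is, however, one concrete point where your version breaks and the paper's does not. You normalize by $c_n=\prod_{k=1}^{n-1}\frac{k+x}{k}$ and then divide by $c_k$; this is undefined precisely when $x$ is a negative integer, since then the factor $k+x$ vanishes for $k=-x$ and $c_n=0$ for all larger $n$ (correspondingly, $\Gamma(1+x)$ in your closed form and in the asymptotic $c_n\sim n^x/\Gamma(1+x)$ sits at a pole). This is not a vacuous corner case for this paper: the lemma is applied to the counterbalanced walk with $x=-2p$, $p\in(0,1)$, so $x=-1$ occurs at $p=1/2$, exactly where Theorem \ref{th4.2} is still claimed. The paper avoids this by starting the product at an index $n_0$ with $n_0+x>0$, i.e.\ using $\gamma_{j,n}(x)=\prod_{k=j}^{n-1}\frac{k+x}{k}$ and the representation $a_n=a_{n_0}\gamma_{n_0,n}(x)+\sum_{j=n_0+1}^n b_j\gamma_{j,n}(x)$; your proof needs the same adjustment (everything downstream then goes through verbatim, since $\gamma_{j,n}(x)\asymp (n/j)^x$ for $j\ge n_0$). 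A second, more cosmetic point: in the case $x<1$ you write $b_k/c_k\sim b\,\Gamma(1+x)k^{-x}$, which presumes $b\ne 0$, whereas the lemma is invoked with $b=(1-p)m_1$, possibly zero; the conclusion should be read as $a_n/n\to b/(1-x)$ (as Toeplitz's lemma delivers), and your own $o(\cdot)$ bookkeeping from the second part covers $b=0$, so this only needs rephrasing, not a new idea.
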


\begin{proof}
Write
\bestar
\gamma_{j,n}(x)=\prod_{k=j}^{n-1} \frac{k+x}{k}=\frac{\Gamma(n+x)}{\Gamma(n)}\frac{\Gamma(j)}{\Gamma(j+x)},~~~~j\le n.
\eestar
Choose $n_0\in \mathbb{N}$ so that $n_0+x>0$,  then
\be
	a_{n} = a_{n_0}\gamma_{n_0,n}(x) + \sum_{j=n_0+1}^{n}b_j \gamma_{j,n}(x). \label{svar}
	\ee

Note that $\gamma_{j,n}(1)=n/j$ and for $x\ne 1$,
\bestar
\gamma_{j,n}(x)=
\frac{\Gamma(n+x)}{(x-1)\Gamma(n)}\Big(\frac{\Gamma(j)}{\Gamma(j+x-1)}-\frac{\Gamma(j+1)}{\Gamma(j+x)}\Big).
\eestar
Hence
\bestar
\sum_{j=n_0+1}^n \gamma_{j,n}(x)\sim \left\{
\begin{array}{ll}
{n}/({1-x}), & x<1,\\
n\log n, & x=1,\\
\frac{\Gamma(n_0+1)}{(x-1)\Gamma(n_0+x)} n^{x}, & x>1.
\end{array}\right.
\eestar
Now (\ref{ansim}) follows from Toeplitz's lemma.

Next, we assume $b_n=o(n^{-r})$.
Since ${\Gamma(n+x)}/{\Gamma(n)}\sim n^x$, there exists $c_1>1$ and $n_1\in \mathbb{N}$ such that
$n^x/c_1\le {\Gamma(n+x)}/{\Gamma(n)}\le c_1n^x$ for  $n\ge n_1$.
And for any $\varepsilon>0$, there exists $n_2\in \mathbb{N}$ such that $n_2\ge n_1$ and $|b_n| \le \varepsilon n^{-r}$ for $n\ge n_2$.
Hence
\be
\sum_{j=n_2}^{n} |b_j| \gamma_{j,n}(x) \le \varepsilon c_1^2 n^x \sum_{j=n_2}^{n}j^{-r-x}
\le \left\{
\begin{array}{ll}
C\varepsilon n^{1-r}, & r+x<1,\\
 C\varepsilon n^x\log n, & r+x=1,\\
 C\varepsilon n^x, & r+x>1,
\end{array}
\right. \label{sumbgamma}
\ee
where $C$ is a constant not depending on $n$ and $\varepsilon$.
(\ref{sumbgamma2}) follows from   (\ref{svar}) and (\ref{sumbgamma}) by noting that $\max_{1\le j\le n_2}\gamma_{j,n}(x)=O(n^x)$ holds for fixed $n_2$.
\end{proof}

For any $ n, j \in \mathbb{N} $ with $n\ge j$,  let
$$ N_j(n) = \# \{ l \le n : \hat{X}_l = X_j \} $$
denote the number of occurrences of the variable $X_j$ in the first $n$ steps of the algorithm (\ref{defhatX}).  Then we have
\be
\label{hats1}
\hat{S}(n)=\sum_{j=1}^{n} N_j(n) X_j,~~~~\hat{S}^*(n)=\sum_{j=1}^{n} N_j(n) Z_j .
\ee
When $N_j(n)\ge 1$ ( i.e., $n\ge j$ and $\varepsilon_j=0$),
we write $j=l_{j,1}<l_{j,2}<\cdots<l_{j, N_j(n)}\le n$ for the increasing sequence of steps of the algorithm (\ref{defhatX}) at which
$X_j$ appears. Construct the genealogical tree $\mathbb{T}_j(n)$ with vertex set $\{l_{j,1}, l_{j,2}, \cdots, l_{j, N_j(n)}\}$, rooted at $j=l_{j,1}$,
and such that for any $1\le a<b\le N_j(n)$, ~$(l_{j,a}, l_{j,b})$ is an edge of   $\mathbb{T}_j(n)$ if and only if $U_{l_{j,b}}=l_{j,a}$.
A vertex $l$ in  $\mathbb{T}_j(n)$ is called odd (respectively, even) when its distance to the root $j$ in  $\mathbb{T}_j(n)$ is an odd (respectively, even) number.
We write Odd$(\mathbb{T}_j(n))$ (respectively, Even$(\mathbb{T}_j(n))$) for the number of odd (respectively, even) vertices in $\mathbb{T}_j(n)$
 and set $ \Delta_j(n)= \text{Even}(\mathbb{T}_j(n)) - \text{Odd}(\mathbb{T}_j(n)) $.
If $N_j(n)=0$, then $\mathbb{T}_j(n)$ is the empty graph and
 we set $\Delta_j(n)=\text{Even}(\mathbb{T}_j(n)) =\text{Odd}(\mathbb{T}_j(n))=0$   by convention.

Observe that for $ n\ge j$, the variable $X_j$ appears exactly Even$(T_j(n))$ times and its opposite $-X_j$ appears exactly Odd$(T_j(n))$ times in $\check{S}(n)$. So
\be
\label{checks1}
\check{S}(n)=\sum_{j=1}^{n} \Delta_j(n) X_j,~~~~\check{S}^*(n)=\sum_{j=1}^{n} \Delta_j(n) Z_j .
\ee

\begin{lemma} \label{Njn} Let $j$ be a fixed integer. Then $N_j(n)/\omega_n\rightarrow 0~~a.s.$ and in $L_1$,
where
\bestar
\omega_n=\left\{
\begin{array}{ll}
\sqrt{n}, & ~~~~0<p<1/2,\\
\sqrt{n\log n}, & ~~~~p=1/2,\\
n, & ~~~~1/2<p<1.
\end{array}
\right.
\eestar
\end{lemma}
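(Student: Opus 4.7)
The plan is to identify a non-negative martingale built from $N_j(n)$, deduce the almost-sure growth bound $N_j(n)=O(n^p)$, and verify that $n^p$ is negligible compared to $\omega_n$ in each of the three regimes.

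First I would compute the one-step conditional expectation. Fix $j$ and take $n\ge j$. Given $\mathscr{F}_n$, the event $\{\hat X_{n+1}=X_j\}$ occurs exactly when $\varepsilon_{n+1}=1$ and $U_{n+1}$ falls on one of the $N_j(n)$ positions at which $X_j$ has previously appeared, so by the construction (\ref{defhatX}),
$$
\mathbb{E}\bigl(N_j(n+1)\,\big|\,\mathscr{F}_n\bigr)=N_j(n)+\frac{p\,N_j(n)}{n}=\hat\gamma_n\,N_j(n).
$$
Combined with the definition (\ref{defan}) of $\hat a_n$, this is precisely the statement that $\{\hat a_n N_j(n),\,\mathscr{F}_n\}_{n\ge j}$ is a non-negative martingale, with constant $L^1$-norm $\hat a_j\,\mathbb{E}N_j(j)\le 1$.

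Next, Doob's non-negative martingale convergence theorem produces an almost surely finite limit $L_j\ge 0$ with $\hat a_n N_j(n)\to L_j$ a.s. Since $\hat a_n\sim \Gamma(1+p)\,n^{-p}$ by Stirling's formula, this yields $N_j(n)=O(n^p)$ almost surely. A direct check shows $n^p/\omega_n\to 0$ in every regime: the ratio equals $n^{p-1/2}$ when $0<p<1/2$, equals $(\log n)^{-1/2}$ when $p=1/2$, and equals $n^{p-1}$ when $1/2<p<1$. This yields the almost-sure part. The $L_1$ convergence is even easier: because $N_j(n)\ge 0$, the martingale identity gives $\mathbb{E}N_j(n)=\hat a_j\mathbb{E}N_j(j)/\hat a_n=O(n^p)$, so $\mathbb{E}(N_j(n)/\omega_n)=\mathbb{E}N_j(n)/\omega_n\to 0$.

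No serious obstacle is anticipated. The only delicate point is recognising the exact combinatorial form of the one-step drift, but once that is in place the Gamma-ratio scaling $\hat a_n$ already introduced in (\ref{defan}) does all the heavy lifting, and the rest is an elementary comparison of regularly varying sequences across the phase diagram $p\in(0,1)$.
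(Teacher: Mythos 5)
Your proof is correct. The key drift computation $\mathbb{E}(N_j(n+1)\mid\mathscr{F}_n)=\frac{n+p}{n}N_j(n)$ is exactly the one the paper uses, and your $L_1$ step (non-negativity plus $\mathbb{E}N_j(n)=O(n^p)=o(\omega_n)$) coincides with the paper's. The almost-sure step is organised differently, though: the paper shows that $N_j(n)/\omega_n$ is itself a non-negative supermartingale for $n$ large (because $\frac{(n+p)\omega_n}{n\,\omega_{n+1}}\le 1$ eventually), so it converges a.s., and then identifies the limit as $0$ via the $L_1$ convergence; you instead pass to the exactly normalised non-negative martingale $\hat a_nN_j(n)$, invoke Doob's convergence theorem to get an a.s. finite limit $L_j$, and conclude $N_j(n)=O(n^p)$ a.s., after which $n^p=o(\omega_n)$ finishes the argument in all three regimes. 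Your route buys a slightly stronger conclusion --- the a.s. growth rate $N_j(n)=O(n^p)$, which in the super-diffusive regime $p>1/2$ is sharper than the $o(n)$ the lemma asserts --- at the cost of needing the asymptotics $\hat a_n\sim\Gamma(1+p)n^{-p}$ (which the paper establishes anyway); the paper's route avoids identifying any limit random variable but must couple the supermartingale convergence with the $L_1$ statement to pin the a.s. limit at $0$. Both arguments are complete and essentially interchangeable here.
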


\begin{proof}
	For $ n \ge j $, $ N_j(n+1)-N_j(n)=1 $ if and only if $ \varepsilon_{n+1}=1 $ and $ U_{n+1}\in \mathbb{T}_j(n)$, otherwise $ N_j(n+1)-N_j(n)=0 $. This implies that 
	\be
	\mathbb{E}\big( N_j(n+1)  \big| \mathscr{F}_n \big)&=& N_j(n)+\mathbb{E}\big( N_j(n+1)-N_j(n) \big| \mathscr{F}_n \big) \nonumber\\
&=& N_j(n)+p\frac{N_j(n)}{n}=\frac{n+p}{n} N_j(n).  \label{enj}
	\ee
Hence
\be
\mathbb{E}(N_j(n))=\prod_{k=j}^{n-1} \frac{k+p}{k}  \mathbb{E}(N_j(j))=\frac{(1-p)\Gamma(j)\Gamma(n+p)}{\Gamma(n)\Gamma(j+p)}\sim \frac{(1-p)\Gamma(j)}{\Gamma(j+p)}n^{p},
\label{ENjn}
\ee
which implies that $\mathbb{E}(N_j(n))/\omega_n\rightarrow 0$. Since $ N_j(n) \ge 0 $, we have $N_j(n))/\omega_n\stackrel{L_1}{\rightarrow} 0$.

By simple calculations, for sufficiently large $n$ we have $\frac{(n+p)\omega_n}{n\omega_{n+1}}<1$  and then
\bestar
 \mathbb{E}\Big( \frac{N_j(n+1)}{\omega_{n+1}} \Big| \mathscr{F}_n \Big) = \frac{(n+p)\omega_n}{n\omega_{n+1}}\frac{N_j(n)}{\omega_n}\le \frac{N_j(n)}{\omega_n}.
 \eestar
 Hence there exists $n_0\in \mathbb{N}$ such that $\{{N_j(n)}/\omega_n, n\ge n_0\} $ is a supermartingale,
and the $L_1$ convergence implies the almost sure convergence.
\end{proof}

\begin{lemma} \label{deltajn} Let $j$ be a fixed integer. Then $\Delta_j(n)/\sqrt{n}\rightarrow 0~~a.s.$
\end{lemma}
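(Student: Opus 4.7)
The plan is to show that a suitably rescaled version of $\Delta_j$ is a martingale whose squared increments are summable, then conclude via Kronecker's lemma. First I analyze the dynamics of $\Delta_j$. A new vertex is attached to $\mathbb{T}_j$ at step $n+1$ only if $\varepsilon_{n+1}=1$ and $U_{n+1}\in \mathbb{T}_j(n)$, in which case the parity of vertex $n+1$ is the opposite of that of its parent $U_{n+1}$. Writing $\xi_{n+1}:=\Delta_j(n+1)-\Delta_j(n)\in\{-1,0,1\}$, this gives
\begin{align*}
\mathbb{E}(\xi_{n+1}\mid\mathscr{F}_n) &= \frac{p}{n}\bigl(\text{Odd}(\mathbb{T}_j(n))-\text{Even}(\mathbb{T}_j(n))\bigr)=-\frac{p}{n}\Delta_j(n),\\
\mathbb{E}(\xi_{n+1}^2\mid\mathscr{F}_n) &= \frac{p}{n}N_j(n),
\end{align*}
so $\mathbb{E}(\Delta_j(n+1)\mid\mathscr{F}_n)=\frac{n-p}{n}\Delta_j(n)$. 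Since $\check{a}_{n+1}/\check{a}_n=n/(n-p)$, the process $L_n:=\check{a}_n\Delta_j(n)$ is an $\{\mathscr{F}_n\}$-martingale; as $\check{a}_n\sim\Gamma(1-p)n^p$, proving $\Delta_j(n)/\sqrt{n}\to 0$ a.s.\ reduces to proving $L_n/n^{p+1/2}\to 0$ a.s.

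Next I estimate the variance of the increments. Using $\check{a}_{k-1}=\frac{k-1-p}{k-1}\check{a}_k$, a direct calculation shows
$$L_k-L_{k-1}=\check{a}_k\bigl(\xi_k-\mathbb{E}(\xi_k\mid\mathscr{F}_{k-1})\bigr),$$
whence $\mbox{Var}(L_k-L_{k-1}\mid\mathscr{F}_{k-1})\le \check{a}_k^2\,\mathbb{E}(\xi_k^2\mid\mathscr{F}_{k-1})=\frac{p\check{a}_k^2}{k-1}N_j(k-1)$. Combining $\check{a}_k^2=O(k^{2p})$ with $\mathbb{E}(N_j(n))=O(n^p)$ from (\ref{ENjn}) yields $\mathbb{E}((L_k-L_{k-1})^2)=O(k^{3p-1})$, and therefore
$$\sum_{k=1}^\infty \frac{\mathbb{E}((L_k-L_{k-1})^2)}{k^{2p+1}}=O\Big(\sum_{k=1}^\infty k^{p-2}\Big)<\infty.$$

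Consequently the martingale differences $(L_k-L_{k-1})/k^{p+1/2}$ are summable in $L^2$, so by Theorem 2.15 of \cite{HH1980} the series $\sum_k (L_k-L_{k-1})/k^{p+1/2}$ converges almost surely, and Kronecker's lemma delivers $L_n/n^{p+1/2}\to 0$ a.s., whence $\Delta_j(n)/\sqrt{n}\to 0$ a.s. The principal obstacle is the regime $p>1/2$: there the trivial bound $|\Delta_j(n)|\le N_j(n)$ combined with Lemma \ref{Njn} gives only $\Delta_j(n)=o(n)$, far short of the target. The saving observation is that $\xi_k^2=|\xi_k|$, since $\xi_k\in\{-1,0,1\}$, so $\mathbb{E}(\xi_k^2\mid\mathscr{F}_{k-1})$ is linear (rather than quadratic) in $N_j(k-1)$; this is precisely what keeps $\mathbb{E}((L_k-L_{k-1})^2)$ of order $k^{3p-1}$ and hence summable against the weight $k^{-(2p+1)}$.
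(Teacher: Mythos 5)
Your proof is correct and follows essentially the same route as the paper's: rescale by $\check{a}_n$ to obtain a martingale, bound the second moments of its increments by $O(k^{3p-1})$ using the fact that $\mathbb{E}(\xi_k^2\mid\mathscr{F}_{k-1})$ is linear in $N_j(k-1)$, then conclude via Theorem 2.15 of \cite{HH1980} and Kronecker's lemma. The only (harmless) streamlining is that you work with the uncentered martingale $\check{a}_n\Delta_j(n)$ and the inequality $\mbox{Var}(\xi_k\mid\mathscr{F}_{k-1})\le\mathbb{E}(\xi_k^2\mid\mathscr{F}_{k-1})$, which lets you skip the paper's explicit asymptotics for $\mathbb{E}(\Delta_j(n))$ and $\mathbb{E}(\Delta_j^2(n))$.
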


\begin{proof} For $n\ge j$, we observe that $n+1$ is odd (respectively, even) in $\mathbb{T}_j(n+1)$ if and only if $n+1\in \mathbb{T}_j(n+1)$ and  $U_{n+1}$ is  even (respectively, odd) in $\mathbb{T}_j(n)$.
So
\bestar
\Delta_j(n+1)-\Delta_j(n)=\left\{
\begin{array}{cl}
1,&  \mbox{if}~ \varepsilon_{n+1}=1 \mbox{~and~} U_{n+1} \mbox{~is  odd in}~ \mathbb{T}_j(n),\\
-1, &  \mbox{if}~\varepsilon_{n+1}=1 \mbox{~and~} U_{n+1} \mbox{~is even in}~ \mathbb{T}_j(n),\\
0, & \mbox{otherwise}.
\end{array}
\right.
\eestar
This implies that
\be
	\mathbb{E}\big( \Delta_j(n+1)-\Delta_j(n) \big| \mathscr{F}_n \big)&=& -p\frac{\Delta_j(n)}{n}, \label{Edeltadiff1}\\
\mathbb{E}\big( (\Delta_j(n+1)-\Delta_j(n))^{2} \big| \mathscr{F}_n \big)&=& p\frac{N_j(n)}{n}.  \label{Edeltadiff2}
\ee
Write
$
 M_n=\check{a}_n (\Delta_j(n)-\mathbb{E}(\Delta_j(n)))
$ for $n\ge j$, where $\check{a}_n$ is defined as in (\ref{checkan}).  It follows from (\ref{Edeltadiff1}) that $\{M_n,\mathscr{F}_n, n\ge j\}$ is a martingale and
\be
\mathbb{E}(\Delta_j(n))=\prod_{k=j}^{n-1} \frac{k-p}{k}  \mathbb{E}(\Delta_j(j))=\frac{(1-p)\Gamma(j)\Gamma(n-p)}{\Gamma(n)\Gamma(j-p)}\sim \frac{(1-p)\Gamma(j)}{\Gamma(j-p)}n^{-p}.
\label{Edeltajn}
\ee
Applying (\ref{Edeltadiff2}) gives
\bestar
\mathbb{E}( \Delta_j^2(n+1))=\frac{n-2p}{n}\mathbb{E}(\Delta_j^2(n)) +  \frac{p}{n}\mathbb{E}(N_j(n)),
\eestar
and hence by (\ref{enj}),
\bestar
\mathbb{E}( \Delta_j^2(n+1))-\frac{\mathbb{E}(N_j(n+1))}{3}=\frac{n-2p}{n} \Big(\mathbb{E}( \Delta_j^2(n))-\frac{\mathbb{E}(N_j(n))}{3}\Big).
\eestar
Combining this with (\ref{ENjn})  shows that
\be
\mathbb{E}( \Delta_j^2(n))
=\frac{\mathbb{E}(N_j(n))}{3}+O(n^{-2p})\sim  \frac{(1-p)\Gamma(j)}{3\Gamma(j+p)}n^{p}.  \label{vardeltajn}
\ee

Let $Y_j=M_j$ and let $
Y_{n}=M_n-M_{n-1}=\check{a}_n( \Delta_j(n)-\check{\gamma}_{n-1}\Delta_j(n-1))
$
 for $n\ge j+1$. Then it follows from (\ref{ENjn}) and (\ref{vardeltajn}) that for $n\ge j$,
 \bestar
  \check{a}_n^{-2}\mathbb{E}(Y_n^2)&\le& 2\mathbb{E}(\Delta_j(n)-\Delta_j(n-1))^2 +\frac{2p^2\mathbb{E}(\Delta_j^2(n-1))}{(n-1)^2}\\
 &\le& \frac{2\mathbb{E}(N_j(n-1))}{n-1}+\frac{2\mathbb{E}(\Delta_j^2(n-1))}{(n-1)^2}=O(n^{p-1}).
 \eestar
Hence
\bestar
\sum_{n=j}^{\infty}\frac{\mathbb{E}(Y_n^2)}{n\check{a}_n^2}\le C\sum_{n=j}^{\infty}n^{p-2}<\infty.
\eestar
This implies  $\sum_{n=j}^{\infty} n^{-1}\check{a}_n^{-2}\mathbb{E}(Y_n^2|\mathscr{F}_{n-1})<\infty~a.s.$ and applying Theorem 2.15 in \cite{HH1980} gives that
$\sum_{n=j}^{\infty} n^{-1/2}\check{a}_n^{-1} Y_n$ converges a.s. Hence  by Kronecker's lemma,
\bestar
\frac{\Delta_j(n)-\mathbb{E}(\Delta_j(n))}{\sqrt{n}}=\frac{\sum_{k=j}^n Y_k}{\sqrt{n}\check{a}_n}\rightarrow 0~~a.s.
\eestar
The desired result follows from (\ref{Edeltajn}).
\end{proof}

\begin{lemma} \label{lemma7} Suppose that $\{Y_j, j\ge 1\}$ is a random variable sequence and $\{Y_{j,n}, j\ge 1, n\ge 1\}$ is a random variable array.
If $Y_{j,n}\rightarrow 0~a.s.$ for any fixed $j$ and $\mathbb{P}(Y_n\ne 0 ~i.o.)=0$, then
\bestar
\sum_{j=1}^n Y_{j,n}Y_j\rightarrow 0~~a.s.
\eestar
\end{lemma}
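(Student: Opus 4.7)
The proof will be an elementary measure-theoretic argument combining the hypothesis $\mathbb{P}(Y_n \ne 0~ i.o.) = 0$ with the pointwise convergence $Y_{j,n} \to 0$ for each fixed $j$. The key observation is that the former statement tells us that, off a single null event, there are only finitely many nonzero terms $Y_j$, so the supposedly infinite sum $\sum_{j=1}^n Y_{j,n}Y_j$ is in fact a finite sum of at most $J(\omega)$ terms once $n \geq J(\omega)$.

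More precisely, let $N_1 = \{\omega : Y_n(\omega) \ne 0 \text{ for infinitely many } n\}$; by hypothesis $\mathbb{P}(N_1) = 0$. For each fixed $j$, let $N_{2,j} = \{\omega : Y_{j,n}(\omega) \not\to 0\}$; again $\mathbb{P}(N_{2,j}) = 0$. Put $N = N_1 \cup \bigcup_{j=1}^{\infty} N_{2,j}$, which is still a null set. For any $\omega \notin N$, there exists a finite integer $J(\omega)$ with $Y_j(\omega) = 0$ for all $j > J(\omega)$, and simultaneously $Y_{j,n}(\omega) \to 0$ as $n \to \infty$ for every $j \leq J(\omega)$.

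Consequently, for $\omega \notin N$ and any $n \geq J(\omega)$,
\begin{equation*}
\sum_{j=1}^{n} Y_{j,n}(\omega)\, Y_j(\omega) \;=\; \sum_{j=1}^{J(\omega)} Y_{j,n}(\omega)\, Y_j(\omega),
\end{equation*}
which is a finite sum (with a fixed, $\omega$-dependent number of summands) in which each term tends to $0$ as $n \to \infty$. Hence the sum tends to $0$, which gives the conclusion since $\mathbb{P}(N) = 0$.

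There is no real obstacle here; the statement is essentially a bookkeeping lemma designed to isolate a clean citable fact used elsewhere in Section~\ref{sect4}. The only point that requires mild care is to collect the countably many null sets $N_{2,j}$ (one per fixed $j$) together with $N_1$ into a single null set before picking the random threshold $J(\omega)$, so that the same $\omega$ can be used for both the truncation of $j$ and the convergence in $n$.
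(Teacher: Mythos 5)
Your proof is correct and is essentially the same argument as the paper's: the paper decomposes the complement of $\{Y_n\ne 0~i.o.\}$ into the events $A_k=\cap_{m\ge k}\{Y_m=0\}$ and notes that on each $A_k$ the sum reduces to the finite sum $\sum_{j=1}^{k}Y_{j,n}Y_j\to 0$, which is exactly your truncation at the random index $J(\omega)$. Your explicit care in collecting the countably many null sets is a fine (and implicit in the paper) bookkeeping point; nothing further is needed.
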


\begin{proof} Write
$
A_k=\cap_{m=k}^{\infty} \{Y_m=0\}
$ for $k\ge 1$.
Then on $A_k$, we have
\bestar
\lim_{n\rightarrow \infty}\sum_{j=1}^n Y_{j,n}Y_j= \lim_{n\rightarrow \infty}\sum_{j=1}^{k} Y_{j,n}Y_j=0~~a.s.
\eestar
Hence
$
\sum_{j=1}^n Y_{j,n}Y_j\rightarrow 0~~a.s.
$ on $\cup_{k=1}^{\infty} A_k=\{Y_n\ne 0~i.o.\}^c$ and the desired result follows.
\end{proof}

\begin{lemma} \label{lemma5} If $\mathbb{E}(X_1^2)<\infty$, then we have
\be
\mbox{Var}(\check{S}^*(n)) \sim \frac{n\check{\sigma}^2}{2p+1} \label{varcheck}
\ee
and
\be
&&\mbox{Var}(\hat{S}^*(n)) \sim \left\{
\begin{array}{ll}
\sigma^2 n/(1-2p), & p<1/2,\\
\sigma^2n\log n, & p=1/2,\\
O(n^{2p}), & p>1/2.
\end{array}\right.  \label{evarhat2}
\ee
If $\mathbb{E}(X_1^2)<\infty$ and $t_n=\sqrt{n}$, then
$\mathbb{E}(\check{S}^*(n)) =(1-p)m_1n/(1+p)+o(\sqrt{n})$
and
\be
&&\mathbb{E}(\hat{S}^*(n))=m_1n+\left\{
\begin{array}{ll}
o(\sqrt{n}), & p<1/2,\\
O(\sqrt{n}), & p=1/2,\\
O(n^{p}), & p>1/2.
\end{array}\right.  \label{evarhat1}
\ee
\end{lemma}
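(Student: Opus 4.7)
The four claims are estimates on the expectation and variance of two related step-reinforced sequences, and all of them will follow by setting up linear recursions of the form $a_{n+1} = \frac{n+x}{n}a_n + b_{n+1}$ and applying Lemma~\ref{lemmalimita}. The key preliminary observation is that, since $t_n = \sqrt n$ and $\mathbb{E}(X_1^2) < \infty$, one has $|\mathbb{E}(Z_n) - m_1| \le n^{-1/2}\mathbb{E}(X_1^2 I(|X_1| > \sqrt n)) = o(n^{-1/2})$ and $\mathbb{E}(Z_n^2) \to m_2$.

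For the means, taking expectations in \eqref{YnFn1} gives $\mathbb{E}(\hat S^*(n+1)) = \frac{n+p}{n}\mathbb{E}(\hat S^*(n)) + (1-p)\mathbb{E}(Z_{n+1})$, and the analogous computation for the counterbalanced walk (using $\mathbb{E}(\check Z_{n+1}\mid\mathscr{F}_n) = -(p/n)\check S^*(n) + (1-p)\mathbb{E}(Z_{n+1})$) yields $\mathbb{E}(\check S^*(n+1)) = \frac{n-p}{n}\mathbb{E}(\check S^*(n)) + (1-p)\mathbb{E}(Z_{n+1})$. Part~\eqref{ansim} of Lemma~\ref{lemmalimita} with $x=\pm p$ identifies the leading orders $m_1 n$ and $\check\mu n$. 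Writing $\mathbb{E}(\hat S^*(n)) = m_1 n + \hat e_n$ and $\mathbb{E}(\check S^*(n)) = \check\mu n + \check e_n$ and subtracting, the errors obey the same kind of recursion with driving term $(1-p)(\mathbb{E}(Z_{n+1}) - m_1) = o(n^{-1/2})$. Part~\eqref{sumbgamma2} of Lemma~\ref{lemmalimita} with $r = 1/2$ then delivers $\check e_n = o(\sqrt n)$ (always, since $-p < 1/2$), $\hat e_n = o(\sqrt n)$ when $p < 1/2$, and $\hat e_n = O(n^p)$ when $p > 1/2$.

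For the variances, the conditional variance decomposition combined with $\mathbb{E}(\hat Z_{n+1}^2\mid\mathscr{F}_n) = (p/n)\sum_{j=1}^n \hat Z_j^2 + (1-p)\mathbb{E}(Z_{n+1}^2)$ produces a recursion of the form $V_{n+1} = \frac{n+2p}{n}V_n + R_n$, in which $R_n$ collects $(p/n)\sum_j \mathbb{E}(\hat Z_j^2)$, $(1-p)\mathbb{E}(Z_{n+1}^2)$, and $-\mathbb{E}[(\mathbb{E}(\hat Z_{n+1}\mid\mathscr{F}_n))^2]$. Applying the same recursion technique to $S_k := \sum_{j=1}^k \mathbb{E}(\hat Z_j^2)$, which satisfies $S_k = \frac{k-1+p}{k-1}S_{k-1} + (1-p)\mathbb{E}(Z_k^2)$, gives $S_n/n \to m_2$, and hence $R_n \to m_2 - (p + (1-p))^2 m_1^2 = \sigma^2$ after using $\mu_n/n \to m_1$. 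Part~\eqref{ansim} of Lemma~\ref{lemmalimita} with $x = 2p$ yields the three regimes for $\mathrm{Var}(\hat S^*(n))$. The counterbalanced case is entirely parallel: one obtains $\check V_{n+1} = \frac{n-2p}{n}\check V_n + \check R_n$ with $\check R_n \to m_2 - p^2\check\mu^2 + 2p(1-p)m_1\check\mu - (1-p)^2 m_1^2$, which simplifies to $m_2 - \check\mu^2 = \check\sigma^2$ via the identity $(1+p)\check\mu = (1-p)m_1$; Lemma~\ref{lemmalimita} with $x = -2p$ then gives $\check V_n \sim \check\sigma^2 n/(1+2p)$.

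The only non-routine step is the critical case $p = 1/2$ for the mean, where a black-box application of Lemma~\ref{lemmalimita} only yields $\hat e_n = o(\sqrt n \log n)$, whereas the statement demands $O(\sqrt n)$. I will sharpen this by writing the explicit solution $\hat e_n = \hat e_1\gamma_{1,n}(1/2) + \sum_{j\ge 2}\gamma_{j,n}(1/2)(1-p)(\mathbb{E}(Z_j)-m_1)$ with $\gamma_{j,n}(1/2) = O(\sqrt{n/j})$, and bounding, via Fubini,
\begin{equation*}
\sum_{j=1}^n j^{-1/2}\mathbb{E}(|X_1|I(X_1^2 > j)) = \mathbb{E}\Bigl(|X_1|\sum_{j=1}^{X_1^2 \wedge n} j^{-1/2}\Bigr) \le 2\mathbb{E}\bigl(|X_1|\sqrt{X_1^2 \wedge n}\bigr) \le 4\mathbb{E}(X_1^2) < \infty,
\end{equation*}
which forces $\hat e_n = O(\sqrt n)$. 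A secondary minor point is the algebraic collapse of $\check R_n$ into $\check\sigma^2$, which hinges precisely on the defining identity $(1+p)\check\mu = (1-p)m_1$.
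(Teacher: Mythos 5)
Your proof is correct and follows essentially the same route as the paper: linear recursions of the form handled by Lemma \ref{lemmalimita} for both the means and the (law-of-total-variance) variance, with the critical case $p=1/2$ of the mean sharpened to $O(\sqrt n)$ by the same Fubini computation $\sum_j j^{-1/2}\mathbb{E}(|X_1|I(X_1^2>j))\le 2\mathbb{E}(X_1^2)$. The only cosmetic difference is that you obtain the needed input $\frac{1}{n}\sum_{j\le n}\mathbb{E}(\hat Z_j^2)\to m_2$ by one more application of Lemma \ref{lemmalimita}, whereas the paper proves the pointwise statement $\mathbb{E}(\hat Z_n^2)\to m_2$ via Lemma \ref{Njn} and Toeplitz's lemma; both suffice, since only the Ces\`aro average enters the variance recursion.
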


\begin{proof}  We only prove  (\ref{evarhat2}) and (\ref{evarhat1})  since the proofs for $\mathbb{E}(\check{S}^*(n))$ and $\mbox{Var}(\check{S}^*(n))$ are similar and we omit the details.
It follows from (\ref{YnFn1}) that
\be
\mathbb{E}( \hat{S}^*(n+1))=\frac{n+p}{n}\mathbb{E}(\hat{S}^*(n)) + (1-p)\mathbb{E}(Z_{n+1}). \label{indehat}
\ee
Applying Lemma \ref{lemmalimita} gives $\mathbb{E}( \hat{S}^*(n))/n\rightarrow m_1$ since $\mathbb{E}(Z_{n+1})\rightarrow m_1$.
Observe that
	\be
	\label{EFnshatstar2}
	\mathbb{E}\big( \hat{Z}_{n+1}^2 \big| \mathscr{F}_n \big)
	= \frac{p}{n}\sum_{k=1}^n \hat{Z}_{k}^2 + (1-p)\mathbb{E}(Z_{n+1}^2)
    =\frac{p}{n}\sum_{k=1}^n N_k(n){Z}_{k}^2 + (1-p)\mathbb{E}(Z_{n+1}^2).
	\ee
Hence $\mathbb{E}( \hat{Z}_{n+1}^2)= pn^{-1}\sum_{k=1}^n \mathbb{E}(N_k(n))\mathbb{E}({Z}_{k}^2) + (1-p)\mathbb{E}(Z_{n+1}^2)$.
Combining this with   Lemma \ref{Njn},  Toeplitz's lemma and the fact that $\sum_{k=1}^n \mathbb{E}(N_k(n))=n$ implies that
$
\mathbb{E}( \hat{Z}_{n}^2) \rightarrow m_2.
$
By using \eqref{YnFn0} and \eqref{EFnshatstar2}, we have
	\bestar
	\text{Var}\big( \hat{S}^*(n+1) \big| \mathscr{F}_n \big)
	&=& \text{Var}\big( \hat{Z}_{n+1}\big| \mathscr{F}_n \big)=  \mathbb{E}\big( \hat{Z}_{n+1}^2 \big| \mathscr{F}_n \big)  - \big( \mathbb{E}\big( \hat{Z}_{n+1} \big| \mathscr{F}_n \big) \big)^2 \\
	&=& \frac{p}{n}\sum_{k=1}^n \hat{Z}_{k}^2 + (1-p)\mathbb{E}(Z_{n+1}^2)- \Big( \frac{p}{n}\hat{S}^*(n) + (1-p)\mathbb{E}(Z_{n+1}) \Big)^2
	\eestar
and hence
\bestar
	\mathbb{E}(\text{Var}\big( \hat{S}^*(n+1) \big| \mathscr{F}_n \big))
	&=& \frac{p}{n}\sum_{k=1}^n \mathbb{E}(\hat{Z}_{k}^2) + (1-p)\mathbb{E}(Z_{n+1}^2)- \mathbb{E}\Big( \frac{p}{n}\hat{S}^*(n) + (1-p)\mathbb{E}(Z_{n+1}) \Big)^2\\
   &=& b_{n+1}- \frac{p^2}{n^2}\mbox{Var}(\hat{S}^*(n)),
	\eestar
where
\bestar
b_{n+1}:=\frac{p}{n}\sum_{k=1}^n \mathbb{E}(\hat{Z}_{k}^2) + (1-p)\mathbb{E}(Z_{n+1}^2)-\Big( \frac{p}{n}\mathbb{E}(\hat{S}^*(n)) + (1-p)\mathbb{E}(Z_{n+1}) \Big)^2\rightarrow \sigma^2.
\eestar
Then applying  \eqref{YnFn1} gives that
	\bestar
	\text{Var}\big( \hat{S}^*(n+1) \big)
	&=& \mathbb{E} \big( \text{Var}\big( \hat{S}^*(n+1) \big| \mathscr{F}_n \big) \big) + \text{Var} \big( \mathbb{E}\big( \hat{S}^*(n+1) \big| \mathscr{F}_n \big) \big) \\
&=&  b_{n+1}- \frac{p^2}{n^2}\mbox{Var}(\hat{S}^*(n))
	+ \Big(1+\frac{p}{n}\Big)^2\text{Var}\big( \hat{S}^*(n) \big) \\
&=&  \frac{n+2p}{n}\mbox{Var}(\hat{S}^*(n))+ b_{n+1}.
	\eestar
Now (\ref{evarhat2}) follows from (\ref{ansim}).		

Next,  assume that $t_n=\sqrt{n}$. It follows from (\ref{indehat}) that
\bestar
\mathbb{E}( \hat{S}^*(n+1))-m_1(n+1)=\frac{n+p}{n}\Big(\mathbb{E}(\hat{S}^*(n))-m_1n\Big) - (1-p)\mathbb{E}(X_{n+1}I(|X_{n+1}|>\sqrt{n+1})).
\eestar
Define
\bestar
\tilde{a}_n=\mathbb{E}(\hat{S}^*(n))-m_1n, ~~~~\tilde{b}_n=-(1-p)\mathbb{E}(X_{n}I(|X_{n}|>\sqrt{n})).
\eestar
Then
\bestar
|\tilde{b}_n|\le n^{-1/2}\mathbb{E}(X_1^2I(|X_1|>\sqrt{n}))=o(n^{-1/2}),
\eestar
and hence (\ref{evarhat1}) follows from Lemma \ref{lemmalimita}  for $p\ne 1/2$.

If $p=1/2$, then by Fubini's theorem,
\bestar
\sum_{j=n_2}^{n} |\tilde{b}_j| \gamma_{j,n}(1/2) &\le& c_1^2 \sqrt{n} \sum_{j=n_2}^{n}j^{-1/2}\mathbb{E}(|X_{1}|I(|X_{1}|>\sqrt{j}))\\
&\le& c_1^2 \sqrt{n} \mathbb{E}\Big(|X_{1}|\sum_{j=1}^n j^{-1/2}I(|X_{1}|>\sqrt{j})\Big)\\
&\le&  c_1^2 \sqrt{n} \mathbb{E}\Big(|X_{1}|\sum_{X_1^2>j} j^{-1/2})\Big)\le (c_1^2/2) \sqrt{n}\mathbb{E}(X_1^2)=O(\sqrt{n}),
\eestar
where $n_2,~c_1$ and $\gamma_{j,n}(\cdot)$ are defined as in the proof of Lemma \ref{lemmalimita}. Hence we have $\tilde{a}_n=O(\sqrt{n})$ from (\ref{svar})
and the fact that $\max_{1\le j\le n_2} \gamma_{j,n}(1/2)=O(\sqrt{n})$. The proof of Lemma \ref{lemma5} is complete.
\end{proof}

\begin{lemma} \label{lemma6}
Let $\{c_n, n\ge 1\}$ and $\{d_n, n\ge 1\}$ be sequences of positive numbers such that $c_n\rightarrow 1$ and $d_n\rightarrow 1$.
If (\ref{asinva2}) and (\ref{weakinva2}) hold, then
\be
\frac{1}{\sqrt{s_n^2\log\log s_n}}\max\limits_{0\le t\le 1} \Big|c_{[nt]}M_{[nt]}-W(d_{[nt]}s^2_{[nt]})\Big|\to 0~~a.s. \label{asinva5}
\ee
and
\be
\frac{1}{s_n}\max\limits_{0\le t\le 1} \Big|c_{[nt]}M_{[nt]}-W(d_{[nt]}s^2_{[nt]})\Big|\stackrel{P}{\longrightarrow} 0.
\label{weakinva5}
\ee
If (\ref{weakinva}) holds, then
\be
\frac{1}{s_n}\max\limits_{0\le t\le 1} \Big|\frac{c_{[nt]}b_{[nt]}M_{[nt]}}{b_n}-t^{\beta}W(d_ns^2_{n}t^{2\alpha})\Big|\stackrel{P}{\longrightarrow} 0. \label{pinva5}
\ee
Moreover, (\ref{asinva5})$-$(\ref{pinva5}) still hold by replacing $c_{[nt]}$ and $d_{[nt]}$ by $c_n$ and $d_n$ respectively.
\end{lemma}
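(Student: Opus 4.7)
The plan is a standard three-term decomposition applied uniformly in $t$, using only the hypotheses $c_n,d_n\to 1$, the single-index invariance principles \eqref{asinva2}, \eqref{weakinva2}, \eqref{weakinva}, and the scaling and modulus-of-continuity properties of Brownian motion. The starting point is the identity
\begin{align*}
c_{[nt]}M_{[nt]}-W(d_{[nt]}s^2_{[nt]})
&= c_{[nt]}\bigl(M_{[nt]}-W(s^2_{[nt]})\bigr)\\
&\quad+(c_{[nt]}-1)\,W(s^2_{[nt]})\\
&\quad+\bigl(W(s^2_{[nt]})-W(d_{[nt]}s^2_{[nt]})\bigr),
\end{align*}
and each of the three summands will be controlled separately after taking $\max_{0\le t\le 1}$.

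For \eqref{asinva5}, the first summand is handled by noting that $c_n\to 1$ implies $\sup_n|c_n|<\infty$ and that \eqref{asinva2} together with the monotonicity of $s_n^2\log\log s_n$ upgrades to $\max_{k\le n}|M_k-W(s_k^2)|=o(\sqrt{s_n^2\log\log s_n})$ a.s.\ (for $k\le K$ bounded and for $k>K$ using the individual a.s.\ convergence). For the second summand, the law of the iterated logarithm for $W$ gives $\max_{0\le u\le s_n^2}|W(u)|=O(\sqrt{s_n^2\log\log s_n})$ a.s.; splitting the index $k=[nt]$ into $k\le K$ (where $|c_k-1|$ is merely bounded but $|W(s_k^2)|$ is bounded as well) and $k>K$ (where $\sup_{k>K}|c_k-1|$ is arbitrarily small) forces this contribution to be $o(\sqrt{s_n^2\log\log s_n})$ a.s. For the third summand I use that $|d_{[nt]}-1|$ is eventually smaller than any preassigned $r-1>0$, so $d_{[nt]}s^2_{[nt]}\in[s^2_{[nt]}/r,\,rs^2_{[nt]}]$, and then invoke \eqref{rate} in the uniform form
\begin{equation*}
\max_{k\le n}\ \sup_{s_k^2/r\le u\le rs_k^2}\frac{|W(u)-W(s_k^2)|}{\sqrt{s_n^2\log\log s_n}}\to 0\quad\text{a.s.},
\end{equation*}
letting $n\to\infty$ and then $r\downarrow 1$, with small indices handled separately since $s_k^2$ is bounded there.

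The weak version \eqref{weakinva5} uses the same decomposition but with convergence in probability: the first term inherits \eqref{weakinva2}; the second uses $s_n^{-1}\sup_{0\le t\le 1}|W(s_n^2 t)|\stackrel{d}{=}\sup_{0\le t\le 1}|W(t)|$, which is tight, combined with $c_n\to 1$; and the third uses the almost sure continuity bound $\omega(W,1+h,h)\to 0$ as $h\to 0$, exactly as in the closing paragraph of the proof of Theorem \ref{th1a}. For \eqref{pinva5}, I decompose
\begin{equation*}
\frac{c_{[nt]}b_{[nt]}M_{[nt]}}{b_n}-t^{\beta}W(d_n s^2_n t^{2\alpha})
\end{equation*}
as the expression appearing in \eqref{weakinva} multiplied by $c_{[nt]}$, plus $(c_{[nt]}-1)t^{\beta}W(s_n^2 t^{2\alpha})$, plus $t^{\beta}(W(s_n^2 t^{2\alpha})-W(d_n s_n^2 t^{2\alpha}))$; the three pieces are handled by \eqref{weakinva}, the scaling of $W$, and the uniform continuity of $W$ on $[0,2s_n^2]$ (using $d_n\to 1$), respectively. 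The main obstacle in the whole argument is the third-term estimate in the almost sure statement \eqref{asinva5}, because passing from the single-time control \eqref{rate} to a bound uniform in $k\le n$ requires either an appeal to L\'evy's modulus on $[0,r s_n^2]$ or a dyadic decomposition; once this uniform control is in place the remainder is bookkeeping.

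Finally, to replace $c_{[nt]},d_{[nt]}$ by $c_n,d_n$ in any of \eqref{asinva5}--\eqref{pinva5}, the error is bounded by
\begin{equation*}
\max_{k\le n}|c_k-c_n|\cdot|M_k|\quad\text{and}\quad\max_{k\le n}|W(s^2_k)-W(d_n s^2_k)|,
\end{equation*}
where for the first quantity I split into $k\le K$ (bounded a.s.) and $k>K$ (where $|c_k-c_n|\le 2\sup_{m\ge K}|c_m-1|$ is arbitrarily small, combined with the LIL bound on $\max_{k\le n}|M_k|$), and for the second I repeat the modulus-of-continuity argument above with the single parameter $|d_n-1|\to 0$ in place of $|d_{[nt]}-1|$. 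This closes the proof.
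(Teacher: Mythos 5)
Your proposal is correct and follows essentially the same route as the paper: the same three-term decomposition (the paper attaches the factor $c_{[nt]}-1$ to $M_{[nt]}$ rather than to $W(s^2_{[nt]})$, which is equivalent via the LIL bound), the same split of indices into a fixed initial segment and a tail where $|c_k-1|$, $|d_k-1|$ are small, and the same appeal to (\ref{rate}) and the modulus of continuity for the time-change error. The step you flag as the main obstacle (uniformity over $k\le n$ in the third term) needs no L\'evy modulus or dyadic argument: since $\sqrt{s_k^2\log\log s_k}\le\sqrt{s_n^2\log\log s_n}$ for $k\le n$, the pointwise bound from (\ref{rate}) is already uniform after the small-index split, which is exactly how the paper handles it.
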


\begin{proof} We only prove (\ref{asinva5}) since the others are similar and we omit the details.
If (\ref{asinva2}) holds, then  applying the law of the iterated logarithm for $\{W(t), t\ge 0\}$ gives
\bestar
\limsup_{n\rightarrow \infty}\frac{\max\limits_{1\le k\le n}|M_{k}|}{\sqrt{2s_n^2\log\log s_n}}=\limsup_{n\rightarrow \infty}\frac{\max\limits_{1\le k\le n}|W(s_k^2)|}{\sqrt{2s_n^2\log\log s_n}}\le
\limsup_{n\rightarrow \infty}\frac{\max\limits_{0\le t\le s_n^2}|W(t)|}{\sqrt{2s_n^2\log\log s_n}}=1~~~~a.s.
\eestar
Hence
\be
&&\frac{1}{\sqrt{s_n^2\log\log s_n}}\max\limits_{0\le t\le 1}|c_{[nt]}-1||M_{[nt]}|\nonumber\\
&&~~~~\le \max\limits_{m\le k\le n}|c_{k}-1|\frac{\max\limits_{1\le k\le n}|M_{k}|}{\sqrt{s_n^2\log\log s_n}}+\frac{\max\limits_{1\le k\le m} |c_{k}-1||M_k|}{\sqrt{s_n^2\log\log s_n}}\rightarrow 0~~a.s.
\label{adddiff1}
\ee
as $n\rightarrow \infty$ and then $m\rightarrow \infty$.
Define $\Delta_{m}=\max_{k\ge m}|d_k-1|$. Then $\Delta_m\rightarrow 0$ as $m\rightarrow \infty$ and hence by (\ref{rate}),
\bestar
&&\frac{1}{\sqrt{s_n^2\log\log s_n}}\max\limits_{0\le t\le 1} \Big|W(s^2_{[nt]})-W(d_{[nt]}s^2_{[nt]})\Big|\\
&&~~\le \frac{1}{\sqrt{s_n^2\log\log s_n}}\max\limits_{1\le k\le m} \Big|W(s^2_{k})-W(d_{k}s^2_{k})\Big|
+\sup_{u, v\le s_n^2, |u-v|\le \Delta_{m}v} \frac{|W(u)-W(v)|}{\sqrt{s_n^2\log\log s_n}}\rightarrow 0~~a.s.
\eestar
as $n\rightarrow \infty$ and then $m\rightarrow \infty$. Combining this with  (\ref{asinva2}) and (\ref{adddiff1}) implies  (\ref{asinva5}).
\end{proof}

\begin{proof}[Proof of Theorem \ref{LLN}]
 Choose $t_n=n$ for all $n\ge 1$.
By Fubini's theorem, we have
\bestar
\sum_{n=1}^{\infty}\mathbb{P}(Z_n\ne X_n)&=&\sum_{n=1}^{\infty}\mathbb{P}(|X_n|>n)=\sum_{n=1}^{\infty}\mathbb{E}(I(|X_1|>n))\\
&=&\mathbb{E}\Big(\sum_{n=1}^{\infty}I(|X_1|>n)\Big)\le \mathbb{E}(|X_1|)<\infty,
\eestar
and hence $\mathbb{P}(Z_n\ne X_n~i.o.)=0$ follows from the Borel-Cantelli Lemma.  By (\ref{hats1}), (\ref{checks1}) and Lemmas \ref{Njn}$-$\ref{lemma7},  we have
\bestar
\frac{1}{n} (\hat{S}^*(n)-\hat{S}(n))=\sum_{j=1}^n \frac{N_j(n)}{n} (Z_j-X_j)\rightarrow 0~~a.s.
\eestar
and
\bestar
\frac{1}{n} (\check{S}^*(n)-\check{S}(n))= \sum_{j=1}^n \frac{\Delta_j(n)}{n} (Z_j-X_j)\rightarrow 0~~a.s.
\eestar
Applying  Lemma \ref{lemmamoment} gives that
\bestar
\mathbb{E}(\hat{Y}_{n}^2)= \hat{a}_n^2 \Big(\mathbb{E}(\hat{Z}_{n}^2)-\mathbb{E}\big(\big( \mathbb{E}\big( \hat{Z}_{n} \big| \mathscr{F}_{n-1} \big)\big)^2\big)\Big)\le \hat{a}_n^2 \mathbb{E}(\hat{Z}_{n}^2)\le \hat{a}_n^2 \mathbb{E}({Z}_{n}^2).
\eestar
Hence by  Fubini's theorem, we have
\bestar
\sum_{n=1}^{\infty}\frac{\mathbb{E}(\hat{Y}_{n}^2)}{n^2\hat{a}_n^2}&\le& \sum_{n=1}^{\infty}\frac{\mathbb{E}(Z_{n}^2)}{n^2}=\sum_{n=1}^{\infty}\frac{\mathbb{E}(X_1^2I(|X_1|\le n))}{n^2}\\
&=& \mathbb{E}\Big(\sum_{n=1}^{\infty}\frac{X_1^2I(|X_1|\le n)}{n^2}\Big)=\mathbb{E}\Big(X_1^2\sum_{n\ge |X_1|}n^{-2}\Big)\le \mathbb{E}(|X_1|)<\infty.
\eestar
This implies $\sum_{n=1}^{\infty} n^{-2}\hat{a}_n^{-2}\mathbb{E}(\hat{Y}_n^2|\mathscr{F}_{n-1})<\infty~a.s.$ and
applying Theorem 2.15 in \cite{HH1980} yields that $\sum_{n=1}^{\infty} n^{-1}\hat{a}_n^{-1}\hat{Y}_n$ converges a.s. By Kronecker's lemma, we have
\bestar
\frac{\hat{S}^*(n)-\mathbb{E}(\hat{S}^*(n))}{n}=\frac{\sum_{k=1}^n \hat{Y}_k}{n\hat{a}_n}\rightarrow 0~~a.s.
\eestar
Similarly,
\bestar
\frac{\check{S}^*(n)-\mathbb{E}(\check{S}^*(n))}{n}\rightarrow 0~~a.s.
\eestar
The same arguments as we used in the proof of Lemma \ref{lemma5} show that if $E|X_1|<\infty$ then we have $n^{-1}\mathbb{E}(\hat{S}^*(n))\rightarrow m_1$ and
$
n^{-1}\mathbb{E}( \check{S}^*(n))\rightarrow {(1-p)m_1}/(1+p).
$
Now Theorem \ref{LLN} follows by combining the above facts.
\end{proof}

\begin{proof}[Proof of Theorem \ref{th4.1}] Choose $t_n=\sqrt{n}$ for all $n\ge 1$.
By Fubini's theorem, we have
\bestar
\sum_{n=1}^{\infty}\mathbb{P}(Z_n\ne X_n)&=&\sum_{n=1}^{\infty}\mathbb{P}(|X_n|>\sqrt{n})=\sum_{n=1}^{\infty}\mathbb{E}(I(X_1^2>n))\\
&=&\mathbb{E}\Big(\sum_{n=1}^{\infty}I(X_1^2>n)\Big)\le \mathbb{E}(X_1^2)<\infty,
\eestar
and hence $\mathbb{P}(Z_n\ne X_n~i.o.)=0$ by the Borel-Cantelli Lemma. By (\ref{hats1}) and Lemmas \ref{Njn}  and \ref{lemma7}, we have
\be
\frac{1}{\omega_n} (\hat{S}^*(n)-\hat{S}(n))= \sum_{j=1}^n \frac{N_j(n)}{\omega_n} (Z_j-X_j)\rightarrow 0~~a.s.  \label{diffe1}
\ee
for $p\le 1/2$, where $\omega_n$ is defined as in Lemma \ref{Njn}.
It follows from Lemma \ref{lemma5} that $\mathbb{E}(\hat{S}^*(n))=m_1n+o(\omega_n)$. This implies that  for $p<1/2$,
\be
\frac{\max\limits_{0\le t\le 1}\Big|\mathbb{E}(\hat{S}^*([nt]))-m_1[nt]\Big|}{\sqrt{n}}\rightarrow 0,  \label{esdiff}
\ee
 and for $p=1/2$,
\be
\frac{\max\limits_{1/n\le t\le 1}\Big|\frac{\mathbb{E}(\hat{S}^*([nt]))-m_1[nt]}{\sqrt{[nt]}}\Big|}{\sqrt{\log n}}\rightarrow 0.  \label{esdiff1}
\ee

Applying  Lemma \ref{lemmamoment} gives
\bestar
\mathbb{E}(\hat{Y}_{n}^4)\le 8\hat{a}_n^4 \Big(\mathbb{E}(\hat{Z}_{n}^4)+\mathbb{E}\big(\big( \mathbb{E}\big( \hat{Z}_{n} \big| \mathscr{F}_{n-1} \big)\big)^4\big)\Big)\le 16\hat{a}_n^4 \mathbb{E}(\hat{Z}_{n}^4)\le 16\hat{a}_n^4 \mathbb{E}(Z_{n}^4).
\eestar
Write $\hat{s}_n^2=\mbox{Var}(\hat{M}_n)=\hat{a}_n^2 \mbox{Var}(\hat{S}^*(n))$. Then by (\ref{evarhat2}),
\be
\sum_{n=1}^{\infty}\frac{\mathbb{E}(\hat{Y}_{n}^4)}{\hat{s}_n^4}&\le& 16\sum_{n=1}^{\infty}\frac{\mathbb{E}(Z_{n}^4)}{(\mbox{Var}(\hat{S}^*(n)))^2}\le C\sum_{n=1}^{\infty}\frac{\mathbb{E}(Z_{n}^4)}{n^2}\nonumber\\
&=&C\sum_{n=1}^{\infty}\frac{\mathbb{E}(X_1^4I(|X_1|\le \sqrt{n}))}{n^2}
= C\mathbb{E}\Big(\sum_{n=1}^{\infty}\frac{X_1^4I(|X_1|\le \sqrt{n})}{n^2}\Big)\nonumber\\
&=&C\mathbb{E}\Big(X_1^4\sum_{n\ge X_1^2}n^{-2}\Big)\le C\mathbb{E}(X_1^2)<\infty.  \label{EYn4}
\ee
This implies
\bestar
\sum_{n=1}^{\infty} \frac{\mathbb{E}((\hat{Y}_{n}^2-\mathbb{E}(Y_n^2|\mathscr{F}_{n-1}))^2|\mathscr{F}_{n-1})}{\hat{s}_n^4}\le
\sum_{n=1}^{\infty} \frac{\mathbb{E}(\hat{Y}_{n}^4|\mathscr{F}_{n-1})}{\hat{s}_n^4}<\infty~~a.s.
\eestar
Applying Theorem 2.15 in \cite{HH1980} yields that $\sum_{n=1}^{\infty} \hat{s}_n^{-2}(\hat{Y}_{n}^2-\mathbb{E}(Y_n^2|\mathscr{F}_{n-1}))$ converges a.s.
and hence by Kronecker's lemma,
\be
\frac{\sum_{k=1}^n (\hat{Y}_{k}^2- \mathbb{E}(\hat{Y}_{k}^2|\mathscr{F}_{k-1}))}{\hat{s}_n^2} \rightarrow 0~~a.s.   \label{Eyk2}
\ee

Using the same arguments as in the proof of Theorem \ref{LLN} yields $
n^{-1}\sum_{k=1}^n \hat{Z}_k^i\rightarrow m_i~~a.s.
$  for $i=1,2$.
Hence
\bestar
\mathbb{E}(\hat{Z}_{n+1}^i|\mathscr{F}_{n})=(1-p) \mathbb{E}(Z_{n+1}^i)+p\frac{\sum_{k=1}^n \hat{Z}_{k}^i}{n}\rightarrow m_i~~a.s.,~~~~~~i=1,2.
\eestar
This implies that
\bestar
\hat{a}_{n+1}^{-2}\mathbb{E}(\hat{Y}_{n+1}^2|\mathscr{F}_{n})=\mathbb{E}(\hat{Z}_{n+1}^2|\mathscr{F}_{n})-(\mathbb{E}(\hat{Z}_{n+1}|\mathscr{F}_{n}))^2\rightarrow \sigma^2~~a.s.
\eestar
By applying (\ref{evarhat2}), Toeplitz's Lemma   and the fact that $\hat{a}_n\sim n^{-p}\Gamma(1+p)$, we have that for $p\le 1/2$,
\bestar
\frac{\sum_{k=1}^n \mathbb{E}(\hat{Y}_{k}^2|\mathscr{F}_{k-1})}{\hat{s}_n^2}= \frac{\sum_{k=1}^n \hat{a}_{k}^2 (\hat{a}_{k}^{-2}\mathbb{E}(\hat{Y}_{k}^2|\mathscr{F}_{k-1}))}{\sum_{k=1}^n \hat{a}_{k}^2}
 \frac{\sum_{k=1}^n \hat{a}_{k}^2}{\hat{a}_n^2\mbox{Var}(\hat{S}^*(n)) } \rightarrow 1~~a.s.
\eestar
Combining this with (\ref{Eyk2}) yields $\hat{s}_n^{-2}\sum_{k=1}^n \hat{Y}_{k}^2\rightarrow 1~a.s.$
Now using (\ref{EYn4}) and applying  Theorem \ref{th1a}  and Remark \ref{rem2.1} gives that  there exists a richer probability space with a standard Brownian motion $\{\tilde{W}(t), t\ge 0\}$ satisfying that
\be
\frac{1}{\sqrt{\hat{s}_n^2\log\log \hat{s}_n}}\Big|\hat{a}_n(\hat{S}^*(n)-\mathbb{E}(\hat{S}^*(n)))- \tilde{W}(\hat{s}_n^2)\Big|\rightarrow  0~~~a.s.,  \label{asconvhat1}
\ee
and
\be
\frac{1}{\hat{s}_n}\sup_{0\le t\le 1}\Big|\hat{a}_{[nt]}(\hat{S}^*([nt])-\mathbb{E}(\hat{S}^*([nt])))- \tilde{W}(\hat{s}_{[nt]}^2)\Big|\stackrel{P}{\rightarrow} 0.  \label{probconvhat1}
\ee
Since $\hat{a}_n\sim n^{-p}\Gamma(1+p)$,
it follows from (\ref{evarhat2}) that
\be
\hat{s}_n^2= \hat{a}_n^2 \mbox{Var}(\hat{S}^*(n)) \sim \left\{
\begin{array}{ll}
\sigma^2 (\Gamma(1+p))^2 n^{1-2p}/(1-2p), & p<1/2,\\
\sigma^2(\Gamma(3/2))^2 \log n, & p=1/2.
\end{array}\right.  \label{snsim}
\ee

If $p<1/2$,
then by applying Theorem \ref{th3} with $b_n=\hat{a}_n^{-1}$,
\bestar
\frac{1}{\hat{s}_n}\Big|\hat{a}_n(\hat{S}^*([nt])-\mathbb{E}(\hat{S}^*([nt])))-t^{p}\tilde{W}(\hat{s}_n^2 t^{1-2p})\Big|\stackrel{P}{\rightarrow} 0.
\eestar
By applying Lemma \ref{lemma6} with
$c_n=\hat{a}_n^{-1}n^{-p}\Gamma(1+p)$ and $d_n=\sigma^2 (\Gamma(1+p))^2 \hat{s}_n^{-2} n^{1-2p}/(1-2p)$, we have
\be
\frac{1}{\sqrt{n^{1-2p}\log\log n}} \Big|\frac{\sqrt{1-2p}(\hat{S}^*(n)-\mathbb{E}(\hat{S}^*(n)))}{\sigma n^{p}}-\frac{\tilde{W}(\hat{b}_0^2 n^{1-2p})}{\hat{b}_0}\Big|\rightarrow 0~~~a.s.,
\label{th4.1eq3}
\ee
and
\be
\frac{1}{\sqrt{n^{1-2p}}} \max_{0\le t\le 1}\Big|\frac{\sqrt{1-2p}(\hat{S}^*([nt])-\mathbb{E}(\hat{S}^*([nt])))}{\sigma n^{p}}-\frac{t^{p}\tilde{W}(\hat{b}_0^2 (nt)^{1-2p})}{\hat{b}_0}\Big|\stackrel{P}{\rightarrow} 0.
\label{th4.1eq4}
\ee
where $\hat{b}_0=\sigma \Gamma(1+p)/\sqrt{1-2p}$.  Define $W(t)=\tilde{W}(\hat{b}_0^2t)/\hat{b}_0$.
Then $\{W(t), t\ge 0\}$ is a Brownian motion and  (\ref{th4.1eq1})$-$(\ref{th4.1eq2}) follows from
(\ref{diffe1}), (\ref{esdiff}), (\ref{th4.1eq3}) and (\ref{th4.1eq4}).

If $p=1/2$, then by applying Lemma \ref{lemma6} with
$c_n=\hat{a}_n^{-1}n^{-1/2}\Gamma(3/2)$ and $d_n=\sigma^2 (\Gamma(3/2))^2 \hat{s}_n^{-2} \log n$,
it follows from (\ref{asconvhat1}) and (\ref{probconvhat1}) that
\be
\frac{1}{\sqrt{\log n\log\log \log n}} \Big|\frac{\hat{S}^*(n)-\mathbb{E}(\hat{S}^*(n))}{\sigma \sqrt{n}}-\frac{\tilde{W}(\hat{b}_1^2\log n)}{\hat{b}_1}\Big|\rightarrow 0~~~a.s.,
\label{th4.1eq5}
\ee
and
\be
\frac{1}{\sqrt{\log n}} \max_{1/n \le t\le 1}\Big|\frac{\hat{S}^*([nt])-\mathbb{E}(\hat{S}^*([nt]))}{\sigma \sqrt{[nt]}}-\frac{\tilde{W}(\hat{b}_1^2 \log ([nt]))}{\hat{b}_1}\Big| \stackrel{P}{\rightarrow} 0.
\label{th4.1eq6}
\ee
where $\hat{b}_1=\sigma \Gamma(3/2)$.
By setting  $W(t)=\tilde{W}(\hat{b}_1^2t)/\hat{b}_1$  and using (\ref{diffe1}), (\ref{esdiff1}), (\ref{th4.1eq5}) and (\ref{th4.1eq6}), we can get (\ref{th4.1eq7}) and
\be
\frac{1}{\sqrt{\log n}} \max_{1/n \le t\le 1}\Big|\frac{\hat{S}([nt])-m_1[nt]}{\sigma \sqrt{[nt]}}-{W}(\log ([nt]))\Big| \stackrel{P}{\rightarrow} 0.
\label{th4.1eq9}
\ee
Observe that
\bestar
&&\frac{1}{\sqrt{\log n}} \max_{1/n\le  t\le 1}\Big|{W}(\log (nt))-{W}(\log ([nt]))\Big| \\
&&~~~~\stackrel{d}{=}\max_{1/n\le  t\le 1}\Big|{W}\Big(\frac{\log (nt)}{\log n}\Big)-{W}\Big(\frac{\log ([nt])}{\log n}\Big)\Big| \\
&&~~~~\le \max_{0\le u,v \le 1, |u-v|\le 1/\log n} |W(u)-W(v)| {\rightarrow} 0~~~~a.s.
\eestar
Now (\ref{th4.1eq8}) follows and the proof of  Theorem \ref{th4.1} is complete.
\end{proof}

\begin{proof}[Proof of Theorem \ref{th4.2}] Choose $t_n=\sqrt{n}$ for all $n\ge 1$. Since  $\check{a}_n\sim  n^{p}\Gamma(1-p)$,
it follows from (\ref{varcheck}) that
\bestar
\check{s}_n^2:=\mbox{Var}(\check{M}_n)=\check{a}_n^2 \mbox{Var}(\check{S}^*(n)) \sim \frac{n^{2p+1}}{2p+1}(\Gamma(1-p))^2\check{\sigma}^2.
\eestar
From the proof of Theorem \ref{th4.1}, we have
$\mathbb{P}(Z_n\ne X_n~i.o.)=0$ and hence by (\ref{checks1}) and Lemmas \ref{deltajn} and \ref{lemma7},
\be
\frac{1}{\sqrt{n}} (\check{S}^*(n)-\check{S}(n))= \sum_{j=1}^n \frac{\Delta_j(n)}{\sqrt{n}} (Z_j-X_j)\rightarrow 0~~a.s. \label{diffe2}
\ee
Then we can get Theorem \ref{th4.2} by using the same
arguments as in the proof of (\ref{th4.1eq1}) and (\ref{th4.1eq2}).
\end{proof}

\begin{proof}[Proof of (\ref{add})]  By (\ref{th4.1eq8}), we have
\bestar
&&\frac{1}{\sqrt{\log n}} \max_{1/n \le t\le 1}\Big|\frac{\hat{S}([nt])-m_1[nt]}{\sigma \sqrt{n}}-\sqrt{t} W(\log (nt))\Big|\\
 &&~~~~~~\le \frac{1}{\sqrt{\log n}} \max_{1/n \le t\le 1}\Big|\frac{\hat{S}([nt])-m_1[nt]}{\sigma \sqrt{nt}}-{W}(\log (nt))\Big| \stackrel{P}{\rightarrow} 0.
\eestar
Write $a_n=\log n/\log\log n$, then $a_n/\log\log n\rightarrow \infty$ and
\bestar
e^{-a_n/2}=o(e^{-\log \log n})=o((\log n)^{-1}).
\eestar
Theorem 1.2.1 in \cite{CR1981} implies that
\bestar
\frac{\sup_{0\le t\le \log n}\sup_{0\le s\le a_n} |W(t+s)-W(t)|}{\sqrt{4a_n\log\log \log n}}\rightarrow 1~~a.s.
\eestar
Hence applying the law of the iterated logarithm for $\{W(t), t\ge 0\}$ gives
\bestar
&&\frac{1}{\sqrt{\log n}}\sup_{1/n\le t\le 1}|\sqrt{t} (W(\log (nt))-W(\log n))|\\
&\le& 2e^{-a_n/2}\frac{\sup_{0\le t\le \log n}|W(t)|}{\sqrt{\log n}}+\sup_{e^{-a_n}\le t\le 1}\frac{|W(\log (nt))-W(\log n)|}{\sqrt{\log n}}\\
&\le& o(1) \frac{\sup_{0\le t\le \log n}|W(t)|}{(\log n)^{3/2}}+\frac{\sup_{0\le t\le \log n}\sup_{0\le s\le a_n} |W(t+s)-W(t)|}{\sqrt{\log  n}}
\rightarrow 0~~a.s.
\eestar
Now the desired result (\ref{add}) follows.
\end{proof}

\section*{Acknowledgment}
 The research was supported by NSFC (No. 11671373).


\end{document}